\newtheorem{df}{Definition}
\newtheorem{lem}{Lemma}
\newtheorem{prop}{Proposition}
\newtheorem{st}{Theorem}
\newtheorem{conj}{Conjecture}
\newtheorem{cor}{Corollary}
\newcommand{\qbinom}[2]{\genfrac{[}{]}{0pt}{}{#1}{#2}}
\title{The Volume Conjecture for Augmented Knotted Trivalent Graphs}
\author{Roland van der Veen}
\begin{document}

\maketitle

\begin{abstract}
\noindent We propose to generalize the volume conjecture to
knotted trivalent graphs and we prove the conjecture for all
augmented knotted trivalent graphs. As a corollary we find that for any link $L$ there is a link containing $L$ for which the volume conjecture holds.
\end{abstract}

\section{Introduction}
\label{sec.Introduction}

The volume conjecture proposes a relation between the colored Jones invariants of a knot and the simplicial volume of its complement. In the formulation of \cite{MurakamiMurakami} the precise statement is as follows. Note that we use the variable $A$ from skein theory instead of the $q$ used in \cite{MurakamiMurakami} (the variables are related by $A^4 = q$).
\begin{conj} 
\label{conj.Volconj} \textbf{Volume conjecture} \cite{Kashaev}, \cite{MurakamiMurakami}.
For any knot $K$ we have:
$$\lim_{N\to \infty}\frac{2\pi}{N}\log|J_N(K)(e^{\frac{\pi i}{2N}})| = \mathrm{Vol}(\mathbb{S}^3 - K)$$
\noindent where $J_N$ denotes the $N$--colored Jones invariant of $K$ and $\mathrm{Vol}$ is the simplicial volume.
\end{conj}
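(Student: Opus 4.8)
The plan is to prove the identity by extracting the exponential growth rate of the colored Jones invariant through a saddle-point analysis and matching it to the hyperbolic volume via Thurston's gluing equations. First I would invoke the theorem of Murakami--Murakami identifying $J_N(K)(e^{\pi i/2N})$, i.e. the colored Jones at $q = A^4 = e^{2\pi i/N}$, with Kashaev's invariant $\langle K\rangle_N$, which admits a finite state sum built from an ideal triangulation of $\mathbb{S}^3-K$ and Kashaev's $R$-matrix. The entries of this $R$-matrix are cyclic quantum dilogarithms, products $\prod_j (1 - \zeta^j x)$ with $\zeta = e^{2\pi i/N}$, so the whole invariant becomes a finite multiple sum whose summand is a ratio of such $q$-factorials.

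The second step is the analytic heart of the argument. The quantum factorial has a known asymptotic expansion $\log\prod_{j}(1-\zeta^j x) \sim \frac{N}{2\pi i}\mathrm{Li}_2(x) + \cdots$ governed by the Euler dilogarithm. I would apply Poisson summation (or Euler--Maclaurin) to replace the finite sum by a contour integral of the form $\int \exp\!\bigl(\tfrac{N}{2\pi i}V(z)\bigr)\,dz$, where the potential $V$ is a finite sum of dilogarithms, one contribution per tetrahedron of the triangulation. Steepest descent then predicts a growth rate of $\frac{N}{2\pi}\,\mathrm{Im}\,V$ evaluated at the dominant critical point, which is exactly the normalization appearing on the right-hand side of the conjecture.

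The third step is geometric. The critical-point equations $\exp(\partial V/\partial z_i)=1$ are, after exponentiation, Thurston's hyperbolic gluing equations for the ideal triangulation; by Mostow--Prasad rigidity and the Neumann--Zagier analysis their geometric solution corresponds to the complete hyperbolic structure, and the Lobachevsky-function identity $\mathrm{Im}\,V_{\mathrm{crit}} = \sum_i \Lambda(\theta_i) = \mathrm{Vol}(\mathbb{S}^3-K)$ identifies the critical value with the volume. For non-hyperbolic $K$ one argues separately: for torus knots the simplicial volume vanishes and the colored Jones is known to grow subexponentially, matching a zero right-hand side, while for satellite knots one decomposes along the JSJ tori and uses additivity of simplicial volume over the hyperbolic pieces.

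The main obstacle is the rigorous justification of the saddle-point passage in full generality. One must deform the integration contour past the branch singularities of the dilogarithm so that it runs through the geometric critical point, prove that this critical point is the global maximum of $\mathrm{Im}\,V$ among all contributing critical points, and verify that the Gaussian (Hessian) prefactor does not vanish, so that no catastrophic cancellation occurs. Equivalently, one must rule out that some non-geometric critical point with larger $\mathrm{Im}\,V$, or a boundary contribution, dominates the sum. This positivity-and-dominance step has no known proof for an arbitrary knot---it is precisely where all current approaches stall---and I expect it, rather than any of the formal manipulations above, to be the real difficulty.
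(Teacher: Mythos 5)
You were asked to prove Conjecture \ref{conj.Volconj}, and the first thing to say is that this is a \emph{conjecture}: the paper quotes it from \cite{Kashaev} and \cite{MurakamiMurakami} and contains no proof of it, because none exists. Your proposal is the standard strategy (Kashaev's state sum from cyclic quantum dilogarithms, the $q$-factorial asymptotics governed by $\mathrm{Li}_2$, Poisson summation to a contour integral $\int\exp\bigl(\tfrac{N}{2\pi i}V(z)\bigr)\,dz$, identification of the critical-point equations with Thurston's gluing equations), and you yourself name the fatal step: deforming the contour through the geometric critical point and proving it dominates all other critical points and boundary contributions. That is not a technical loose end one could fill in with more care; it \emph{is} the open problem, and conceding it means the proposal is an outline of a known heuristic, not a proof. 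Two further steps would independently fail even granting the analysis for hyperbolic knots: the satellite case cannot be handled by ``additivity of simplicial volume over JSJ pieces,'' because the colored Jones invariant of a satellite does not factor along the JSJ decomposition (cabling formulas mix colors, and no argument is known that transfers the conjecture across them); and the delicacy of such reductions is exactly what the paper documents for links, where $J_N$ can vanish identically at the root of unity (\cite{MMOTY}, \cite{vanderVeen}), forcing the renormalization and the restriction to odd $N$ in Conjecture \ref{conj.So3volconj}.

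It is worth contrasting your route with what the paper actually does, since the comparison explains its architecture. Rather than attack the saddle-point dominance problem, the paper proves a theorem (Theorem \ref{st.Main}) for the restricted class of augmented KTGs, where the problem is engineered away: by Lemma \ref{lem.Ring} each augmentation ring acts at $A=e^{\pi i/2N}$ as a delta function on the fusion summation variables, so the entire multisum for the colored Jones collapses to a \emph{single} term, $\phi_N^\theta N^r\,\mathrm{sixj}_N^{t+1}$, whose asymptotics are known from \cite{Costantino} ($\frac{2\pi}{N}\log|\mathrm{sixj}_N|\to 2\,\mathrm{Vol(Oct)}$); the matching volume is computed not through gluing equations at a critical point but by an explicit decomposition of the outside into $2t+2$ regular ideal octahedra (Lemma \ref{lem.Triangulation}). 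In other words, the paper's contribution is precisely to isolate a family where the step you flagged as hopeless never arises; for the general knot $K$ of Conjecture \ref{conj.Volconj}, neither your proposal nor the paper closes that gap.
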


\noindent To gain more insight into this conjecture and to find simple examples where it holds true we seek to generalize the conjecture to the class of knotted trivalent graphs (KTGs) as defined in \cite{DThurston}. Roughly speaking a KTG is a thickened embedded graph that is allowed to have multiple edges and also edges without vertices, so that KTGs generalize framed knots and links.  

Before turning to general KTGs we first discuss the generalization of the volume conjecture to links. For links the above version of the volume conjecture does not hold, because it fails for many split links \cite{MMOTY} and it also fails in a more serious way for the Whitehead chains defined in \cite{vanderVeen}. 

For a split link (a link some of whose components can be separated from each other by a sphere in the complement) the normalization of the colored Jones invariant has to be adjusted slightly. For knots the colored Jones invariant was normalized by dividing by the unnormalized invariant of the unknot. If we use this normalization for a split link then the colored Jones invariant vanishes at the root of unity as was noted in \cite{MMOTY}. To avoid this problem we propose the following normalization. For a split link with $s$ split components we normalize by dividing by the unnormalized invariant of the $s$--fold unlink. With this normalization the normalized colored Jones invariant becomes multiplicative under distant union, see section \ref{sec.ColoredJones}. Since the simplicial volume is additive with respect to distant union it follows that using this normalization the volume conjecture is true for a split link if it holds for all its split components.

The above conjecture fails in a more serious way in the case of the Whitehead chains. For these links it was shown \cite{vanderVeen} that $J_{N}(e^{\frac{\pi i}{2N}}) = 0$ for all even $N$ but that the limit proposed in the volume conjecture is still valid when one restricts to odd colors $N$. In section \ref{sec.ColoredJones} we will argue that the sequence of even colors is special and that the same failure is not as likely to occur in any other subsequence.

The above motivates the following modification of the volume conjecture that we propose to call the $\mathrm{so(3)}$ volume conjecture. To the best knowledge of the author it still stands a chance to hold for all knots and links. 

\begin{conj} $\mathbf{\mathrm{so(3)}}$ \textbf{volume conjecture}
\label{conj.So3volconj}
\newline The following form of the volume conjecture holds for all knots and links $L$:

$$\lim_{N\to \infty}\frac{2\pi}{N}\log|J_N(L)(e^{\frac{\pi i}{2N}})| = \mathrm{Vol}(L)$$

\noindent where $N$ runs over the odd numbers only and $J_N$ is normalized as described above. 
\end{conj}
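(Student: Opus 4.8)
The plan is to deduce the so(3) volume conjecture for an arbitrary knot or link $L$ from the augmented case by presenting $L$ as a Dehn filling of an augmented link and then tracking both sides of the asymptotic identity through that filling. Every link arises this way: fixing a diagram of $L$, I would augment each twist region by an unknotted crossing circle and delete the twists, obtaining an augmented link $L^{\mathrm{aug}}$ to which the main theorem applies, so that the conjecture already holds for $L^{\mathrm{aug}}$. The original $L$ is recovered by performing $1/k_i$ Dehn surgery on the $i$-th crossing circle, where $k_i$ records the number of full twists deleted from that region. On the geometric side Thurston's hyperbolic Dehn surgery theorem guarantees that the filled manifold $\mathbb{S}^3 - L$ is hyperbolic and expresses $\mathrm{Vol}(L)$ as the critical value of the volume functional on the deformation of the complete structure of $\mathbb{S}^3 - L^{\mathrm{aug}}$ cut out by the filling slopes.

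First I would make the quantum counterpart of this filling explicit. Cutting along the twice-punctured disk bounded by a crossing circle and expanding the circle in the Jones--Wenzl basis expresses the colored Jones invariant of the link carrying the circle as a finite linear combination, over the admissible internal colors $c$, of the invariants of the links obtained by recoloring the two captured strands, with recoupling ($6j$) coefficients. Surgery with slope $1/k_i$ is then implemented on each summand by the $k_i$-th power of the ribbon twist eigenvalue, a known monomial in $A$. Composing these operations writes $J_N(L)(A)$ as an explicit iterated sum of the building blocks appearing in $J_N(L^{\mathrm{aug}})(A)$, weighted by twist eigenvalues, Gauss sums and quantum dimensions. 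Restricting to odd $N$ is essential here: at the odd roots $A=e^{\pi i/2N}$ all Jones--Wenzl idempotents and the Kirby colouring remain nondegenerate, which is precisely why the even subsequence --- where these quantities can vanish, as in the Whitehead chains of \cite{vanderVeen} --- must be excluded.

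Second I would analyze the $N\to\infty$ behavior of this surgery transform by the saddle-point method. The summand is an exponential $\exp\!\big(\tfrac{N}{2\pi}\Phi_L\big)$ whose potential $\Phi_L$ is obtained from the potential of $L^{\mathrm{aug}}$ by adding the twist terms coming from the ribbon eigenvalues; the critical-point equations of $\Phi_L$ are exactly the gluing and filling equations of the surgered hyperbolic structure, so the dominant critical value should equal the $\mathrm{Vol}(L)$ supplied by Thurston's theorem. Matching the exponential growth rate of the transform to this critical value then yields the desired limit for $L$, completing the passage from augmented links to all links.

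The main obstacle is the rigorous asymptotic evaluation of this transform, and specifically the stability of the dominant saddle as the twist parameters are turned on. At present no general theorem lets one interchange the filling operation with the $N\to\infty$ limit: one must show that the geometric critical point persists, remains the unique global maximum of $\mathrm{Re}\,\Phi_L$, and undergoes no collision or Stokes phenomenon as the slopes $1/k_i$ move away from the cusped limit, and one must rule out cancellation among the oscillatory contributions of the subdominant critical points. This amounts to a quantum counterpart of Thurston's Dehn surgery theorem at the level of asymptotics, and it is the step on which the difficulty of the general conjecture is concentrated.
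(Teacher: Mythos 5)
You have set out to prove what the paper itself leaves as an open conjecture: Conjecture~\ref{conj.So3volconj} is not proven in the paper, which establishes it only for the class of augmented KTGs (Theorem~\ref{st.Main}) and for links containing a given link (Corollary~\ref{cor.Sublink}). Your proposal must therefore stand on its own, and it does not: as you concede in your final paragraph, the decisive step --- the rigorous $N\to\infty$ asymptotics of the surgery transform, including persistence and dominance of the geometric saddle under the $1/k_i$ fillings and the exclusion of cancellation among subdominant critical points --- is missing. This is not a peripheral technicality but the entire content of the conjecture. The paper's proof in the augmented case succeeds precisely because the augmentation rings act as delta functions at $A=e^{\pi i/2N}$ (Lemma~\ref{lem.Ring}): they annihilate every summand of the fusion multi-sum except the one where all internal colors equal $N$, leaving a closed-form product whose growth is known from Costantino's asymptotics of $\mathrm{sixj}_N$. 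Undoing the augmentation by Dehn filling on the crossing circles reintroduces exactly the sums over internal colors $c$ (your recoupling expansion along the twice-punctured disk, weighted by twist eigenvalues) whose asymptotic analysis is the open problem. Your reduction thus transports the difficulty rather than resolving it, and no argument is offered for why the critical-point equations of your potential $\Phi_L$ select the filled hyperbolic structure, nor for the interchange of the filling operation with the limit.

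Two further points would need repair even as a program. First, the main theorem does not directly cover the link $L^{\mathrm{aug}}$ you construct: Theorem~\ref{st.Main} applies to $n$-augmented KTGs in which \emph{every} unzip carries at least $n$ parallel rings, where $n$ is determined by a pole count depending on the generating sequence $S$ (the proof of part 1) takes $n=af+1$); a single crossing circle per twist region is a singly augmented object, for which part 1) is not asserted, and you would also have to verify that your diagrammatic augmentation arises from a sequence of KTG moves with rings placed at the unzips. Second, $\mathrm{Vol}(L)$ in the conjecture is defined for \emph{all} links (via simplicial volume, or in the paper's KTG formulation via the sum of hyperbolic JSJ--chambers), so $L$ need not be hyperbolic; Thurston's Dehn surgery theorem guarantees hyperbolicity of the $1/k_i$ fillings only for sufficiently large $|k_i|$ and says nothing about exceptional slopes, so your claim that the filled manifold $\mathbb{S}^3-L$ is hyperbolic is false in general (small twist numbers, split and satellite links all occur), and these cases would require a separate argument through the JSJ decomposition and the multiplicativity of $J_N$ under distant union discussed in Section~\ref{sec.ColoredJones}.
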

 
\noindent The name $\mathrm{so(3)}$ is chosen because we restrict ourselves to odd colors $N$, i.e. representations of the Lie algebra $\mathrm{so(3)}$ instead of $\mathrm{sl}(2)$. The restriction to odd $N$ is natural because Kashaev's original invariant for triangulated links in $\mathbb{S}^3$ was also defined for odd $N$ only, see condition (3.12) in \cite{Kash1}. One might also argue more generally that the odd colors correspond to the spherical representations of $\mathrm{sl}(2)$.

Now we would like to generalize the volume conjecture even further to the class of knotted trivalent graphs (KTGs). A motivation for this generalization is that such graphs show up naturally in the computation of the colored Jones invariant when one applies fusion. Another motivation is that very simple graphs such as planar graphs will have relatively simple Jones invariants and a complement that is easy to triangulate. Considering graphs in their own right will furthermore clarify the role of six-j symbols, since they are the $\mathrm{sl}(2)$-invariants of the tetrahedral graph. In order to obtain a volume conjecture in the case of a KTG we need to define both the colored Jones invariant of a KTG and the volume of a KTG.

The generalization of the colored Jones invariant to KTGs is fairly straightforward and is based on the Kauffman bracket, see section \ref{sec.ColoredJones}. The idea is to connect the three incoming Jones--Wenzl idempotents in a trivalent vertex in the only possible planar way. Alternatively one can think of a trivalent vertex as a Clebsch--Gordan injector of the representation on the incoming strand into the tensor product of the representations of the two outgoing strands. We need a slight extension of the usual formalism to deal with half twisted edges such as a M\"{o}bius band. It is well known that this procedure yields a Laurent polynomial when or KTG is a knot or a link. For general KTG's this will not be the case and we obtain an invariant that is a quotient of Laurent polynomials.

The definition of the volume of a KTG is more complicated and will be treated in detail in section \ref{sec.GeometryComplement}. Here we give a brief overview of the ideas involved. The boundary of the exterior of a graph is a handlebody so if the exterior is to be hyperbolic then the boundary cannot be a cusp but we can require it to be a totally geodesic boundary as in \cite{Frigerio2}. However very different graphs can have homeomorphic exteriors because the structure of edges and vertices is lost. To fix this we exclude annuli and tori around the edges from the boundary so that they become cusps and the remaining punctured spheres become a geodesic boundary. This version of the exterior will be called the outside of the graph. It is shown in \cite{Frigerio} that rigidity still holds for such structures provided that we use a system of closed curves on the boundary to keep track of where the cusps should be. 

To deal with non-hyperbolic graphs we can no longer use the simplicial volume as was done for knots and links. This is because it was shown in \cite{Jungreis} that the simplicial volume of a hyperbolic manifold with geodesic boundary does not agree with its hyperbolic volume when the boundary is non-empty. To get around this we use the JSJ--decomposition and define the volume as the sum of the volumes of the hyperbolic pieces in the decomposition. For links this definition is known to agree with the simplicial volume.

Having defined the colored Jones invariant and the volume of a KTG, the above statement of the $\mathrm{so(3)}$ volume conjecture also makes sense for KTGs. Indeed, we propose that with this interpretation of volume and Jones invariant Conjecture \ref{conj.So3volconj} should be true for all KTGs. 

\begin{conj} The $\mathrm{so(3)}$ volume conjecture holds for all knotted trivalent graphs.
\label{conj.KTG}
\end{conj}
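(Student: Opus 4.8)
The plan is to bootstrap from the subclass treated in this paper: since the conjecture is established for all augmented KTGs, I would try to propagate it to an arbitrary KTG by two complementary mechanisms, the generation of every KTG from a few building blocks by the elementary KTG operations, and the realization of a general KTG outside as a Dehn filling of the corresponding augmented outside. Throughout I would track the Jones growth rate $v(L):=\lim_{N\to\infty}\frac{2\pi}{N}\log\bigl|J_N(L)(e^{\frac{\pi i}{2N}})\bigr|$ over odd $N$ against the JSJ volume $\mathrm{Vol}(L)$, the aim being to show $v(L)=\mathrm{Vol}(L)$ in general once it is known on the augmented subclass.

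For the first mechanism I would use the theorem of D. Thurston that every KTG is obtained from the standard tetrahedral graph by finitely many applications of connected sum, unzip, and framing twist. I would then compute the effect of each operation on both sides. Connected sum is multiplicative on the unnormalized Jones invariant and additive on the JSJ volume, so it matches. Unzip multiplies the invariant by a single quantum $6j$-symbol, whose large-$N$ behaviour is governed by the volume of an associated hyperbolic tetrahedron via the Costantino--Murakami asymptotics, and geometrically the unzip glues in a piece of precisely that volume. The framing twist multiplies by a root-of-unity factor of unit modulus and leaves $\mathrm{Vol}$ unchanged. Were each operation to shift $v$ and $\mathrm{Vol}$ by equal amounts, an induction on the length of the generating word would conclude.

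The hard part will be that this matching is not local. First, the dominant term of $J_N$ for a general KTG is a high-dimensional sum over admissible colourings of the internal edges, and extracting its exponential rate is a steepest-descent problem of exactly the difficulty of the original volume conjecture; the clean single-$6j$ asymptotics used above apply only when the relevant saddle is non-degenerate and correctly placed. Second, and more seriously, a KTG operation can manufacture non-hyperbolic JSJ pieces (Seifert-fibered or graph-manifold pieces) that contribute nothing to $\mathrm{Vol}$, and one must then prove that the corresponding Jones factors do not grow exponentially. Ruling out such spurious exponential growth does not follow formally from the augmented case, and no general argument for it is known.

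As a more robust alternative I would try geometric convergence. Any KTG $L$ has an augmentation $\widehat L$ whose outside is hyperbolic with $v(\widehat L)=\mathrm{Vol}(\widehat L)$ by the present paper, and the outside of $L$ is a Dehn filling of that of $\widehat L$ along the augmentation cusps. Thurston's hyperbolic Dehn surgery theorem then gives $\mathrm{Vol}(L)<\mathrm{Vol}(\widehat L)$, with the deficit tending to zero as the filling slopes grow, and computes $\mathrm{Vol}(L)$ explicitly from the filled structure. On the Jones side the invariant of $L$ is recovered from that of $\widehat L$ by summing over the colour on the augmentation edge, weighted by the appropriate twist and theta factors. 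I would try to locate the dominant colour by a saddle-point analysis and match its rate with the filled volume. The crux here is again analytic: one must interchange the limit $N\to\infty$ with the sum over augmentation colours and locate the saddle uniformly in the filling data. This interchange is an instance of the expectation that the colored Jones asymptotics detect the deformation of the hyperbolic structure under filling, and it is precisely this step that lies beyond current techniques and leaves the full conjecture open.
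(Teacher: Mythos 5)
You cannot be checked against a paper proof here, because the paper does not prove this statement: it is Conjecture \ref{conj.KTG}, explicitly left open, and the paper's Main Theorem (Theorem \ref{st.Main}) establishes it only for $n$-augmented KTGs with $n$ sufficiently large depending on the generating sequence $S$. Your proposal is commendably honest that both of its mechanisms stall at exactly the analytic core, but that means it is not a proof, and the gap you name is the real one. The paper's technique makes this precise: the multisum expression for $\langle\Gamma\rangle_N$ obtained by reversing the KTG moves is collapsed, at $A=e^{\pi i/2N}$, to a single term only because the augmentation rings act as delta functions on the fusion colors (Lemma \ref{lem.Ring}), and the number of rings must exceed the pole count of the summands --- which is why $n$ depends on $S$. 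For a general KTG no rings are present, the full sum over admissible colorings survives, and its exponential rate is an open steepest-descent problem; nothing in the augmented case transfers to it. Your second mechanism is likewise consistent with the paper (Corollary \ref{cor.Sublink} is precisely the statement that every KTG sits inside an augmented one satisfying the conjecture, and the outside of $\Gamma_S$ is a Dehn filling of the outside of $\Gamma'_S$ along the ring cusps), but since $\mathrm{Vol}(\Gamma_S)<\mathrm{Vol}(\Gamma'_S)$ strictly, recovering the filled volume on the Jones side forces the interchange of $N\to\infty$ with the sum over augmentation colors that you correctly flag as beyond current techniques.

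Two factual corrections to the first mechanism. The generating moves in this paper (Theorem \ref{st.KTGgen}) are the triangle move $A$, the half twists $H_\pm$, and the unzip $U$; connected sum is not among them (it enters only via the multiplicativity of $J_N$ under connected sum noted after Proposition \ref{prop.WellDefinedNormalized}). And your skein bookkeeping misassigns the factors: reversing an unzip produces a \emph{summation} with fusion coefficients, while it is the \emph{triangle move} whose reversal contributes a single six-j symbol (figure \ref{fig.Recoupling}); so even the ``local matching'' step of your induction is not a product of single six-j factors but reintroduces the multisum at every unzip, which is where the non-locality you then describe actually originates. In short: your diagnosis of why the general conjecture is hard agrees with the paper's own framing, but no proof exists on either side, and your sketch should be presented as a program with an identified open step, not as a proof attempt.
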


\noindent To provide some evidence for this claim we will prove the $\mathrm{so(3)}$ volume conjecture for the class of augmented KTGs defined below. This will be the main purpose of the paper. 

To describe the construction of augmented KTGs and to organize the calculations it is convenient to have a way to generate all KTGs by simple operations that we define now. For now let us think of a KTG as a thickened embedding of a graph whose edges are ribbons and whose vertices are disks. A more detailed treatment can be found in section \ref{sec.KTGs}. 

\begin{figure}[htp]
\begin{center}
\includegraphics[width = 12 cm]{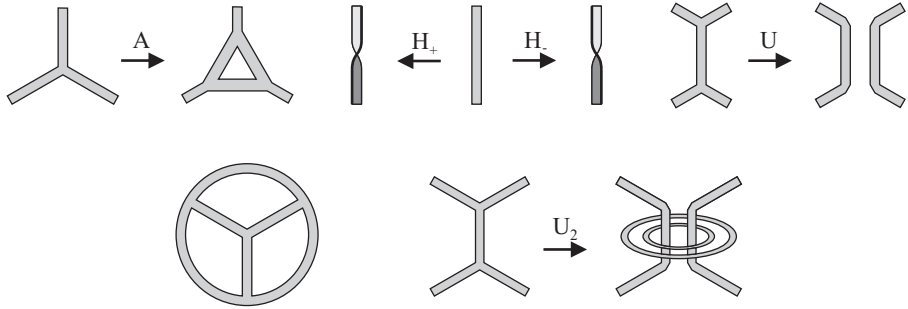}
\caption{First row: the four KTG moves triangle $A$, positive and negative half twists $H_\pm$ and Unzip $U$. Second row: the standard tetrahedron and the $n$--unzip $U_n$ (we have drawn the case $n = 2$).}
\label{fig.KTGmoves}
\end{center}
\end{figure}

\begin{df}
\label{df.KTGmoves}
The following four operations on KTGs will be called the KTG moves, see figure \ref{fig.KTGmoves}. The triangle move $A$ replaces a vertex by a triangle, the positive half twist move $H_+$ inserts a positive half twist into an edge, the negative half twist $H_-$ inserts a negative half twist and finally the unzip move $U$ takes an edge and splices it into two parallel edges.

We also define the following variations on the unzip move called the $n$--unzip $U_n$. This is the unzip together with the addition of $n$
parallel rings encircling the two unzipped strands.
\end{df}

\noindent The four KTG moves defined above are sufficient to generate all KTGs starting from the standard tetrahedron graph shown in figure \ref{fig.KTGmoves}.

\begin{st}
\label{st.KTGgen}
Any KTG can be obtained from the standard tetrahedron using the KTG moves only.
\end{st}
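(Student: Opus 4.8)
The plan is to present an arbitrary KTG by a generic diagram in the plane: a graph carrying two kinds of vertices, the genuine trivalent vertices and the four-valent crossings of the projection, decorated with over/under information and with the blackboard framing. I will then argue by induction on a lexicographic complexity whose first coordinate is the number of crossings and whose second is the number of trivalent vertices, with the standard tetrahedron of figure \ref{fig.KTGmoves} as the base case. Since a positive and a negative half twist inserted at the same place on a band cancel by isotopy, $H_+$ and $H_-$ are mutually inverse and I may use either freely. The argument then falls into two phases: first strip away all knotting to reduce to a planar, flat, unknotted trivalent graph, and then generate that planar graph from the tetrahedron by purely combinatorial moves.

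The engine of the first phase is the observation that \emph{unzip and half twist together manufacture a crossing}: if I insert a half twist into an edge with $H_\pm$ and then unzip that edge, the twisted band splits along its core into two parallel strands that cross once, with the sign of the crossing equal to that of the half twist. Conversely, every single crossing is the unzip of a once-twisted edge, so I can remove crossings one at a time. At a chosen crossing I re-merge the two strands into a single half-twisted edge (the inverse of $U$, which reinstates the two trivalent vertices that unzip had absorbed) and then cancel the twist with the opposite $H_\mp$. Each step lowers the crossing number, and after finitely many of them---followed by deleting any leftover isolated half twists on edges with further $H$ moves---I am left with an honestly planar, zero-framed trivalent graph.

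To manage the direction of the moves I will use the key relation that unzip inverts the triangle move: unzipping one of the three edges of the triangle created by an application of $A$ restores the original vertex, so $A$ is undone by a forward $U$. Together with the mutual inverseness of $H_\pm$ this lets me treat ``reachable from the tetrahedron'' as a symmetric relation, so it suffices to connect each KTG to the tetrahedron by a zigzag of moves in either direction and then reverse the zigzag to obtain a genuine forward sequence. In particular each crossing-removal step above is reversible, so the original KTG is forward-reachable from the planar graph to which it reduces.

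It remains to prove the combinatorial core: every connected planar trivalent graph is move-equivalent to the tetrahedron under the in-plane triangle and unzip moves. I would induct on the number of vertices. An Euler-characteristic count forces a face of degree less than six, and I want to use a triangular face to collapse via the inverse of $A$ (lowering the vertex count by two) or, failing a usable triangle, to unzip a suitable edge; the smallest graphs---above all the theta graph, from which a single triangle move $A$ returns the tetrahedron---serve as the base. This is where I expect the real difficulty. The delicate points are guaranteeing that some reduction is always available and keeps the graph connected and trivalent, choosing which edge to unzip so that the process terminates at the tetrahedron rather than stalling on a bigon or an unexpected small graph, and handling the passage between connected graphs and the multi-component KTGs (such as links) that unzip can create. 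Verifying this combinatorial generation, rather than the knot-theoretic first phase, is the main obstacle of the proof.
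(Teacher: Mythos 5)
Your first phase is essentially sound and is in fact close to what the paper itself uses: the derived move $C$ of figure \ref{fig.Dewanagri} is obtained by unzipping a half-twisted edge, i.e.\ exactly your observation that a crossing is the unzip of a once-twisted band, and reading a chain of inverse unzips backwards to get a \emph{forward} path from the flattened graph to the original KTG is legitimate. The argument breaks at the point where you declare reachability from the tetrahedron to be a symmetric relation. The relation you invoke --- that unzipping one of the three edges of the triangle created by $A$ restores the original vertex --- is false. Unzip the edge $f_{12}$ of the triangle $v_1v_2v_3$ produced by $A$ at a vertex with legs $e_1,e_2,e_3$: the cut along the core of the band separates the two remaining triangle edges $f_{31},f_{23}$ (on the interior side) from the legs $e_1,e_2$ (on the exterior side), so it joins $f_{31}$ to $f_{23}$ into a loop based at $v_3$ and joins $e_1$ to $e_2$ into a strand that misses $v_3$ entirely; you do not recover the $Y$-vertex. (Compare the paper's own remark at the end of section \ref{sec.KTGs}: undoing a triangle move requires a relation involving several triangle moves \emph{and} an unzip, not an inversion.) Consequently neither $A$ nor $U$ is known to have an inverse expressible in forward moves, ``reachable from the tetrahedron'' is not known to be symmetric, and the zigzag you construct in phase two --- which mixes $A^{-1}$ steps with forward unzips --- cannot be reversed into a genuine forward sequence of KTG moves.

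There is a second, independent gap: the combinatorial core of phase two is not proved but only hoped for. You acknowledge yourself that guaranteeing a usable triangular face (rather than a loop, bigon, square or pentagon), ensuring termination at the tetrahedron, and controlling connectivity are unresolved; since the whole reduction rests on that step, the argument is incomplete even if reversibility were granted. The paper sidesteps both difficulties by never using inverse moves: it sweeps the top edge of the standard tetrahedron upward across the target diagram (figure \ref{fig.Sweepout}), realizing every local feature --- maxima, minima, crossings, vertices, half twists --- as a forward composite of $A$, $H_\pm$ and $U$ (figures \ref{fig.ElementarySteps} and \ref{fig.Dewanagri}), and closes up at the final maximum by unzipping the three remaining edges of the tetrahedron. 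To rescue your approach you would need either to exhibit honest forward composites inverting $A$ and $U$, or to reorganize phase two so that it runs only in the forward direction from the tetrahedron to the flattened graph.
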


\noindent According to this theorem we can work with KTGs by studying sequences of KTG moves. Of course there are many inequivalent ways to produce the same KTG using the KTG moves, see section \ref{sec.KTGs}. 

Now we can define the notion of an augmented KTG.

\begin{df}
\label{df.Augmented}
Let $S$ be a sequence of KTG moves. Define the singly augmented KTG corresponding to $S$ to be the KTG obtained from the standard
tetrahedron by the moves of $S$ except that all unzip moves are to be
replaced by $1$--unzip moves. We will denote the singly augmented KTG corresponding to $S$ by $\Gamma'_S$.

Likewise the $n$-augmented KTGs corresponding to $S$ are defined
to be all the KTGs that can be produced from the standard
tetrahedron by the moves of $S$ except that every unzip
move is to be replaced by an $m$--unzip move, where $m \geq n$. Note that one may choose a different $m$ for all unzip moves in $S$.
\end{df}

\noindent Let $\Gamma_S$ be the KTG obtained from a sequence of KTG moves
$S$ and let $\Theta$ be an $n$--augmented KTG corresponding to $S$. Then $\Gamma_S$ is
contained in $\Theta$ and $\Theta - \Gamma_S$ is an $r$--fold unlink. Here is $r$
the number of rings that were added to $\Gamma_S$ to obtain the augmented KTG $\Theta$. The number
$r$ is called the number of augmentation rings of $\Theta$.

With all definitions in place we can now formulate the main theorem of this paper.
\newpage

\begin{st}
\label{st.Main} $\mathrm{\mathbf{(Main\ Theorem)}}$\newline
Let $S$ be a sequence of KTG moves. There exists an $n\in\mathbb{N}$ such that all
$n$-augmented KTGs $\Gamma$ corresponding to $S$ satisfy the following.
\begin{enumerate}

\item[1)]{Let $t$ be the number of
triangle moves in $S$ and let $r$ be the number of augmentation rings of $\Gamma$. Let $\theta$ be the number of half twists counted with sign and define the following numbers.
$$\phi_N = (-1)^{\frac{N-1}{2}}e^{\frac{N^2-1}{4N}\pi i} \quad \mathrm{and} \quad
\mathrm{sixj}_{N}=\sum_{k=0}^{\frac{N-1}{2}}\qbinom{\frac{N-1}{2}}{k}^4(e^{\frac{\pi i}{2N}})$$ The normalized $N$--colored Jones invariant of $\Gamma$ satisfies:
\[J_N(\Gamma)(e^{\frac{\pi i}{2N}}) = \left\{
    \begin{array}{ll}
        \phi_N^\theta N^{r}\mathrm{sixj}_N^{t+1} & \mbox{if $N$ is odd}\\
        0 & \mbox{if N is even}
    \end{array}
\right.\]}

\item[2)]{The JSJ--decomposition of the outside of $\Gamma$ consists of the outside of $\Gamma'_S$ and a Seifert
fibered piece for every $n$--unzip used in the construction of $\Gamma$ such that $n \geq 2$. It follows that $\mathrm{Vol}(\Gamma) = \mathrm{Vol}(\Gamma'_S)$

Moreover the outside of $\Gamma'_S$ is hyperbolic with geodesic boundary and can be
obtained explicitly by gluing $2t+2$ regular ideal octahedra.}

\item[3)]{$\Gamma$ satisfies the $\mathrm{so(3)}$ volume conjecture, but not the original volume conjecture.}
\end{enumerate}
\end{st}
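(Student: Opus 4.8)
The plan is to prove the three parts of the Main Theorem in sequence, since they are largely independent in their methods: part (1) is a skein-theoretic computation, part (2) is a geometric/topological analysis of the complement, and part (3) is the asymptotic comparison that combines the first two.

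For part (1), I would work by induction on the length of the sequence $S$ of KTG moves, tracking how the normalized colored Jones invariant transforms under each move. The key is that the value of $J_N$ is evaluated at the special root of unity $A = e^{\pi i/2N}$, which dramatically simplifies the relevant skein-theoretic quantities. I would first record how each of the four KTG moves affects the invariant: a half twist $H_\pm$ multiplies the invariant by a framing/twist factor, which at this root of unity should evaluate to $\phi_N^{\pm 1}$; the triangle move $A$ introduces a tetrahedral coefficient (a six-$j$ symbol), which at the root of unity collapses to $\mathrm{sixj}_N$; and the unzip move $U$ factors through fusion. The crucial point for augmentation is the effect of adding a single encircling ring: an $m$-unzip with $m \geq 1$ inserts $m$ parallel rings around the two unzipped strands, and each such ring, when evaluated by an encirclement (Hopf-link) formula at this root of unity, should contribute a clean factor of $N$ while forcing the colors to be equal. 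I would verify that with $n = 1$ the singly augmented case already produces the stated formula, starting from the value of the standard tetrahedron (which accounts for the $\mathrm{sixj}_N^{+1}$ with no triangle moves), accumulating one factor of $\mathrm{sixj}_N$ per triangle move, one factor of $\phi_N$ per signed half twist, and one factor of $N$ per augmentation ring. The vanishing at even $N$ should fall out of the fact that the Jones--Wenzl idempotent $f_{N-1}$ and the relevant quantum integers degenerate at even $N$ for this root of unity, consistent with the Whitehead-chain phenomenon discussed in the introduction.

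For part (2), I would analyze the JSJ--decomposition of the outside of $\Gamma$ directly from the augmentation structure. The idea is that adding $m \geq 2$ encircling rings to an unzipped edge creates an essential torus (or Seifert-fibered piece) separating the rings from the rest of the manifold, whereas a single ring does not create such a JSJ piece; this is why the singly augmented KTG $\Gamma'_S$ is exactly the hyperbolic part. I would argue that each $m$-unzip with $m \geq 2$ contributes precisely one Seifert-fibered piece to the decomposition, and since Seifert-fibered pieces carry zero hyperbolic volume, $\mathrm{Vol}(\Gamma) = \mathrm{Vol}(\Gamma'_S)$. The explicit hyperbolic structure on the outside of $\Gamma'_S$ I would establish by exhibiting a decomposition into $2t+2$ regular ideal octahedra, most naturally by building up the octahedral gluing inductively alongside the KTG moves (each triangle move plausibly contributing two octahedra, matching the exponent $t+1$ structure seen in the Jones formula), and invoking the rigidity result of \cite{Frigerio} for manifolds with geodesic boundary and marked cusp curves to conclude this is the complete structure.

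For part (3), I would combine parts (1) and (2). From part (1), restricting to odd $N$ and using $\mathrm{Vol}(\Gamma) = \mathrm{Vol}(\Gamma'_S)$, I would compute the limit
\[
\lim_{N\to\infty}\frac{2\pi}{N}\log\bigl|\phi_N^\theta\, N^r\, \mathrm{sixj}_N^{t+1}\bigr|.
\]
The factors $\phi_N^\theta$ have modulus behaving like a phase or contributing subexponentially, and $N^r$ contributes $\frac{2\pi}{N}\log N^r \to 0$, so the entire limit is governed by the asymptotics of $\mathrm{sixj}_N$. The main analytic task is to show that $\frac{2\pi}{N}\log|\mathrm{sixj}_N| \to v_{\mathrm{oct}}$, the volume of a single regular ideal octahedron, so that $(t+1)$ copies give exactly the volume $(2t+2)\cdot\frac{v_{\mathrm{oct}}}{2} = (2t+2)\,v_{\mathrm{oct}}$ matching the octahedral count from part (2). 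Finally, the failure of the original volume conjecture is immediate from part (1): since $J_N = 0$ for all even $N$, the limit over all $N$ does not exist (or is $-\infty$), so only the $\mathrm{so(3)}$ version, which restricts to odd $N$, can hold.

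I expect the main obstacle to be the asymptotic estimate of $\mathrm{sixj}_N = \sum_k \qbinom{(N-1)/2}{k}^4$ evaluated at $A = e^{\pi i/2N}$. Extracting the exponential growth rate requires a saddle-point or Laplace-type analysis of this $q$-binomial sum at a root of unity, and pinning the limit to exactly the ideal octahedron volume $v_{\mathrm{oct}}$ is the delicate quantitative heart of the argument; everything else is bookkeeping on the KTG moves and a topological identification of the JSJ pieces.
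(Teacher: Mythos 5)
Your outline of parts 2) and 3) tracks the paper's strategy, but part 1) has a genuine gap, and it is precisely the central technical point of the theorem. You assert that a single augmentation ring per unzip already ``contributes a clean factor of $N$ while forcing the colors to be equal,'' and that $n=1$ therefore suffices. This ignores the poles. When the colored Jones invariant is written as a multisum over the fusion labels, the summands are quotients of quantum factorials coming from the six-j symbols and fusion coefficients, and these denominators vanish to some positive order at $A = e^{\pi i/2N}$. A ring encircling an edge labeled $k$ contributes a factor $(-1)^{N-1}[kN]/[k]$, which has only a simple zero at the root of unity when $N \nmid k$; one such zero need not dominate the poles of the summand, so the terms with $N \nmid k$ are not automatically killed by a single ring. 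This is exactly why the theorem is stated as ``there exists an $n$'': the paper bounds the number of poles of any summand by a constant $af$ independent of $N$ (where $a$ bounds the labels relative to $N$ and $f$ bounds the number of quantum factorials in a denominator) and takes $n = af+1$ rings per unzip, so that the zeros contributed by the rings strictly outnumber the possible poles. You also need, and do not supply, the argument that among the surviving terms only the one with all labels equal to $N$ (rather than some $uN$ with $u>1$) contributes; the paper gets this from the observation that fusing two edges labeled $N$ only produces labels strictly between $0$ and $2N$.

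Two smaller points. In part 3) your asymptotic constant is off by a factor of two: the needed input (due to Costantino, which the paper cites rather than proves) is $\lim_{N} \frac{2\pi}{N}\log|\mathrm{sixj}_N| = 2\,\mathrm{Vol}(\mathrm{Oct})$, so that $t+1$ six-j factors match the $2t+2$ octahedra of part 2); with your claimed limit of one octahedron volume per factor the two sides would disagree, and your displayed identity $(2t+2)\cdot\frac{v_{\mathrm{oct}}}{2}=(2t+2)\,v_{\mathrm{oct}}$ is not an identity. Your part 2) sketch is essentially the paper's induction on KTG moves (two truncated octahedra per triangle move, Seifert pieces only for unzips with at least two rings), though the substantive work there is verifying that the resulting gluing has exactly two or four solid angles around each edge, which is what lets one realize all pieces as regular ideal octahedra and obtain a hyperbolic structure with geodesic boundary compatible with the boundary pattern.
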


\noindent The quantum binomial coefficients used in the above definition of $\mathrm{sixj}_N$ are defined in section \ref{sec.ColoredJones}. For a definition of the colored Jones invariant of a KTG, see section \ref{sec.ColoredJones}. The outside of a graph is defined in section \ref{sec.GeometryComplement}, it plays the role of the complement but it is a manifold with boundary pattern \cite{Matveev}. We will also define the volume of such manifolds. In section \ref{sub.GeometryAugmented} we will show how to obtain the explicit glueing of octahedra mentioned above.

The proof of parts 1) and 2) of the main theorem will be given in sections \ref{sec.ColoredJones} and \ref{sec.GeometryComplement}, but it is easy to see how part 3) follows from the first two parts. The key ingredient is the following observation about the numbers $\mathrm{sixj}_N$. It was shown in \cite{Costantino} that $\lim_{N\to \infty}\frac{2\pi}{N}\log|\mathrm{sixj}_N|=2\mathrm{Vol(Oct)}$, where $\mathrm{Vol(Oct)}$ means the hyperbolic volume of the regular ideal octahedron. Plugging in the formula for the colored Jones from part 1) gives: $$\lim_{N\to \infty}\frac{2\pi}{N}\log|J_N(\Gamma)(e^{\frac{\pi i}{2N}})| = 2(t+1)\mathrm{Vol(Oct)}$$ as a limit over all the odd numbers $N$. According to part 2) of the main theorem this is exactly the volume of $\Gamma$ since $\mathrm{Vol}(\Gamma) = \mathrm{Vol}(\Gamma'_S)$ and $\mathrm{Vol}(\Gamma'_S)$ equals $(2t+2)\mathrm{Vol(Oct)}$. The original volume conjecture does not hold because the the even values of $N$ give a colored Jones of $0$. This concludes the proof of part 3) assuming the first two parts of the main theorem.

Now let us note some immediate corollaries.

\begin{cor}
\label{cor.Sublink}
For every KTG $\Gamma$ there is a KTG $\Theta$ containing $\Gamma$ such that $\Theta-\Gamma$ is an
unlink and $\Theta$ satisfies the $\mathrm{so(3)}$ volume conjecture. If $\Gamma$ happens to be a link
then so is $\Theta$.
\end{cor}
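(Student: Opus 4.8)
The plan is to combine the two structural results already in hand: the generation theorem (Theorem \ref{st.KTGgen}) and the Main Theorem (Theorem \ref{st.Main}). The former lets us realize $\Gamma$ as the output $\Gamma_S$ of a sequence of KTG moves, and the latter then supplies an augmented KTG built from the same sequence that simultaneously contains $\Gamma$ and satisfies the $\mathrm{so(3)}$ volume conjecture. The corollary is therefore essentially a repackaging of these facts, and the only point requiring genuine care is the claim that augmentation preserves the property of being a link.

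In detail, I would first invoke Theorem \ref{st.KTGgen} to fix a sequence $S$ of KTG moves with $\Gamma_S = \Gamma$. Applying the Main Theorem to this $S$ yields an integer $n$ such that every $n$-augmented KTG corresponding to $S$ satisfies the $\mathrm{so(3)}$ volume conjecture by part 3). Choosing any such $\Theta$, the remark following Definition \ref{df.Augmented} guarantees that $\Gamma = \Gamma_S$ is contained in $\Theta$ and that $\Theta - \Gamma$ is an $r$-fold unlink, where $r$ is the number of augmentation rings. This already establishes the statement for an arbitrary KTG $\Gamma$.

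It remains to treat the case in which $\Gamma$ is a link, that is, a KTG with no trivalent vertices. Here I would argue directly from the construction of $\Theta$. Passing from $S$ to its $n$-augmented version alters only the unzip moves: each ordinary unzip is replaced by an $m$-unzip with $m \geq n$. By Definition \ref{df.KTGmoves} the $m$-unzip performs exactly the same splicing of an edge into two parallel edges as the ordinary unzip, and in particular removes the same trivalent vertices; the only extra data are the $m$ unknotted rings encircling the unzipped strands, which carry no vertices. Hence the graph underlying $\Theta$ is precisely $\Gamma$ together with these augmentation rings, and $\Theta$ has a trivalent vertex if and only if $\Gamma$ does. Since $\Gamma$ and the rings are all embedded circles, $\Theta$ is again a link whenever $\Gamma$ is, which completes the proof.

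I expect the only real, if modest, obstacle to be this final compatibility between augmentation and the vertex structure: one must verify that replacing unzips by $m$-unzips neither reintroduces vertices nor changes the isotopy type of the part of $\Theta$ coming from $\Gamma$. Once the definitions of the unzip and $m$-unzip moves are unwound this is immediate, but it is the single place where the two assertions of the corollary diverge and therefore the step that most deserves to be spelled out.
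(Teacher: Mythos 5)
Your proof is correct and follows exactly the route the paper intends: the paper states this as an immediate corollary of Theorem \ref{st.KTGgen}, the Main Theorem, and the remark after Definition \ref{df.Augmented} that $\Theta - \Gamma_S$ is an $r$-fold unlink. Your additional check that augmentation introduces no new trivalent vertices (so that $\Theta$ remains a link when $\Gamma$ is) is the right observation and is consistent with the paper's definitions of the $m$-unzip.
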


\begin{cor}
\label{cor.Triangular}
The $\mathrm{so(3)}$ volume conjecture holds for all KTGs that can be constructed from the standard tetrahedron using the triangle move and the half twist move only. The original volume conjecture fails for such KTGs.
\end{cor}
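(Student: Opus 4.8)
The plan is to recognize this corollary as a direct specialization of the Main Theorem (Theorem \ref{st.Main}) to sequences $S$ that contain no unzip moves. The key observation is that augmentation only ever alters unzip moves: by Definition \ref{df.Augmented}, forming an $n$-augmented KTG replaces each unzip by an $m$-unzip with $m \geq n$, and augmentation rings are added only where unzips occur. Hence if $S$ consists solely of triangle moves $A$ and half twist moves $H_\pm$, there are no unzips to modify, so the singly augmented KTG $\Gamma'_S$ and every $n$-augmented KTG corresponding to $S$ coincide with $\Gamma_S$ itself, and the number of augmentation rings is $r = 0$.

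Next I would apply the Main Theorem. Given any KTG $\Gamma$ built from the standard tetrahedron using only the triangle and half twist moves, I fix a sequence $S$ of such moves that produces $\Gamma$; this is possible by hypothesis and is consistent with Theorem \ref{st.KTGgen}. By the previous paragraph, $\Gamma = \Gamma_S$ is (vacuously) an $n$-augmented KTG corresponding to $S$ for every $n$, and in particular for the value of $n$ guaranteed by the Main Theorem. Part 3) of the Main Theorem then immediately yields that $\Gamma$ satisfies the $\mathrm{so(3)}$ volume conjecture, establishing the first assertion of the corollary.

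To confirm that the original volume conjecture fails, I would invoke part 1): since $r=0$ and the sequence contains $t$ triangle moves, the formula reads $J_N(\Gamma)(e^{\frac{\pi i}{2N}}) = \phi_N^{\theta}\,\mathrm{sixj}_N^{t+1}$ for odd $N$ and $0$ for even $N$. Thus $\log|J_N|$ diverges to $-\infty$ along the even colors. On the other hand, part 2) gives $\mathrm{Vol}(\Gamma) = (2t+2)\mathrm{Vol(Oct)} > 0$, so the limit in Conjecture \ref{conj.Volconj}, taken over all $N$, cannot converge to this finite positive value. This proves the second assertion.

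I do not expect a genuine obstacle here, since the statement is a clean corollary of the Main Theorem rather than an independent result. The only points requiring care are the degenerate bookkeeping — namely checking that the absence of unzip moves makes augmentation trivial, so that $\Gamma$ is literally its own augmentation with $r=0$ — and the observation that the strictly positive volume $(2t+2)\mathrm{Vol(Oct)}$ is precisely what rules out the original conjecture along the vanishing even subsequence.
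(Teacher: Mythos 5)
Your proposal is correct and is exactly the argument the paper intends: it presents this corollary as an immediate consequence of the Main Theorem, and your observation that a sequence without unzip moves makes augmentation vacuous (so $\Gamma = \Gamma'_S$ is its own $n$-augmentation with $r=0$ for every $n$) is precisely the reason part 3) applies directly. The additional unwinding of parts 1) and 2) to exhibit the vanishing along even $N$ against the positive volume $(2t+2)\mathrm{Vol(Oct)}$ matches the paper's own derivation of part 3) from parts 1) and 2).
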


\noindent The final corollary has nothing to do with the volume conjecture, but gives an alternative proof of a result by Baker \cite{Baker}.

\begin{cor}
\label{cor.Arithmetic}
Every link is a sublink of an arithmetic link.
\end{cor}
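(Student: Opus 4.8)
The plan is to produce, for a given link $L$, a specific augmented link whose complement is glued from regular ideal octahedra, and then to recognize any such complement as arithmetic. First I would regard $L$ as a KTG with no vertices and apply Theorem \ref{st.KTGgen} to fix a sequence $S$ of KTG moves with $\Gamma_S = L$. Forming the singly augmented KTG $\Gamma'_S$ only inserts unknotted augmentation rings at the unzips and leaves the underlying graph unchanged, so $\Gamma'_S$ is again vertex-free, i.e. a genuine link. By the remark preceding the Main Theorem it contains $L$ with $\Gamma'_S - L$ an unlink, so $L$ is a sublink of $\Gamma'_S$. I deliberately use the singly augmented $\Gamma'_S$ rather than a higher augmentation because part 2) of the Main Theorem then tells us that the JSJ decomposition carries no Seifert pieces: the whole complement $\mathbb{S}^3 - \Gamma'_S$ is hyperbolic and is obtained by gluing $2t+2$ copies of the regular ideal octahedron. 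Any augmentation with $n \geq 2$ would introduce Seifert fibered pieces, destroying hyperbolicity and hence any hope of arithmeticity, so this choice is essential.

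The key step is to show that a finite-volume hyperbolic manifold tessellated by regular ideal octahedra is arithmetic. The plan is to place one octahedron in the upper half-space model with its six ideal vertices at the Gaussian points $0,\infty,\pm 1,\pm i \in \mathbb{Q}(i)\cup\{\infty\}$, using that the regular ideal octahedron is right-angled. Each face-pairing isometry in the explicit gluing of section \ref{sub.GeometryAugmented} then carries one such vertex set to another and therefore lies in $\mathrm{PGL}_2(\mathbb{Q}(i))$; developing the gluing keeps every octahedron vertex in $\mathbb{Q}(i)\cup\{\infty\}$, so the holonomy group is contained in a group commensurable with the Bianchi group $\mathrm{PSL}_2(\mathbb{Z}[i])$. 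Since the Bianchi groups are arithmetic lattices in $\mathrm{PSL}_2(\mathbb{C})=\mathrm{Isom}^+(\mathbb{H}^3)$ and arithmeticity is a commensurability invariant, $\mathbb{S}^3 - \Gamma'_S$ is arithmetic. Thus $\Gamma'_S$ is an arithmetic link containing $L$, which proves the corollary.

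The main obstacle is precisely this arithmeticity verification, namely checking that all the face-pairings in the octahedral gluing can be conjugated simultaneously into a single copy of $\mathrm{PSL}_2(\mathbb{Z}[i])$. This is where the explicit description of the gluing matters: one must confirm that adjacent octahedra meet along isometries preserving $\mathbb{Q}(i)\cup\{\infty\}$, so that the entire developing image of the vertex set stays inside $\mathbb{Q}(i)\cup\{\infty\}$. Rather than carry this out by hand for every $S$, I would prefer to invoke the general and well-known principle that any cusped hyperbolic $3$-manifold decomposing into regular ideal octahedra is commensurable with $\mathbb{H}^3/\mathrm{PSL}_2(\mathbb{Z}[i])$, of which the Whitehead link and Borromean rings complements are the standard prototypes; the octahedral decomposition supplied by part 2) of the Main Theorem then immediately yields arithmeticity.
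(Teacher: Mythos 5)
Your proof is correct and follows essentially the same route as the paper: take the singly augmented link of $L$, use part 2) of the Main Theorem to decompose its complement into regular ideal octahedra, and deduce arithmeticity from the fact that the gluing sits inside the symmetry group of the tiling of $\mathbb{H}^3$ by regular ideal octahedra, which is commensurable with $\mathrm{PSL}_2(\mathbb{Z}[i])$. The paper compresses this into a single sentence citing Thurston, so your write-up merely supplies the details that its proof leaves implicit.
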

\begin{proof}
The singly augmented link corresponding to the given link is an arithmetic hyperbolic 3--manifold, since it is obtained from glueing regular ideal octahedra by symmetries of the tiling of hyperbolic space by regular ideal octahedra, see \cite{Thurston}.
\end{proof}

\noindent The organization of the paper is as follows. In section \ref{sec.KTGs} we discuss KTGs, KTG diagrams and KTG moves. The subject of section \ref{sec.ColoredJones} is skein theory. Here we define the colored Jones invariant of a KTG and show how it can be expressed in terms of six-j symbols. Specializing to the $N$--th root of unity and making use of the special properties of augmentation yields part 1) of the main theorem. In section \ref{sec.GeometryComplement} we give a definition of the volume of a 3--manifold with boundary and we study the geometry of the outside of an augmented KTG. Here we prove part 2) of the main theorem. Section \ref{sec.Conclusion} is a short summary and a conclusion. \newline

\noindent \textbf{Acknowledgement.} I would like to thank Dave Futer, Rinat Kashaev, Jessica
Purcell, Nicolai Reshetikhin and Dylan Thurston for enlightening
conversations and the organizers of the conferences, workshops and seminars in Hanoi, Strasbourg, Basel, Aarhus and Geneva for giving me the
opportunity to present parts of this work there.

\section{Knotted Trivalent Graphs}
\label{sec.KTGs}

In this section we state some general facts about knotted trivalent graphs (KTGs). We discuss the extra Reidemeister moves that are necessary to relate isotopic KTG diagrams and describe how every KTG can be generated from the standard tetrahedron by the KTG moves.

\begin{df}
A fat graph is a 1--dimensional simplicial complex together with an embedding into a surface as a spine.

A knotted trivalent graph (KTG) is a trivalent fat graph embedded
as a surface into $\mathbb{S}^3$ and considered up to isotopy.
\end{df}

\noindent By a diagram of a KTG we will mean a regular projection of its spine KTG onto the plane together with the usual crossing information and small diagonal lines indicating where an edge of a KTG makes a half twist. Except for the locations in the diagram where there is a half twist the surface of the KTG is assumed to be parallel to the projection plane as in the blackboard framing. The half twist pictures are necessary because a KTG such as the M\"{o}bius band cannot be given the blackboard framing. See Figure \ref{fig.ExampleKTG} for an example of a KTG together with its diagram.

\begin{figure}[here]
\begin{center}
\includegraphics[width = 12cm]{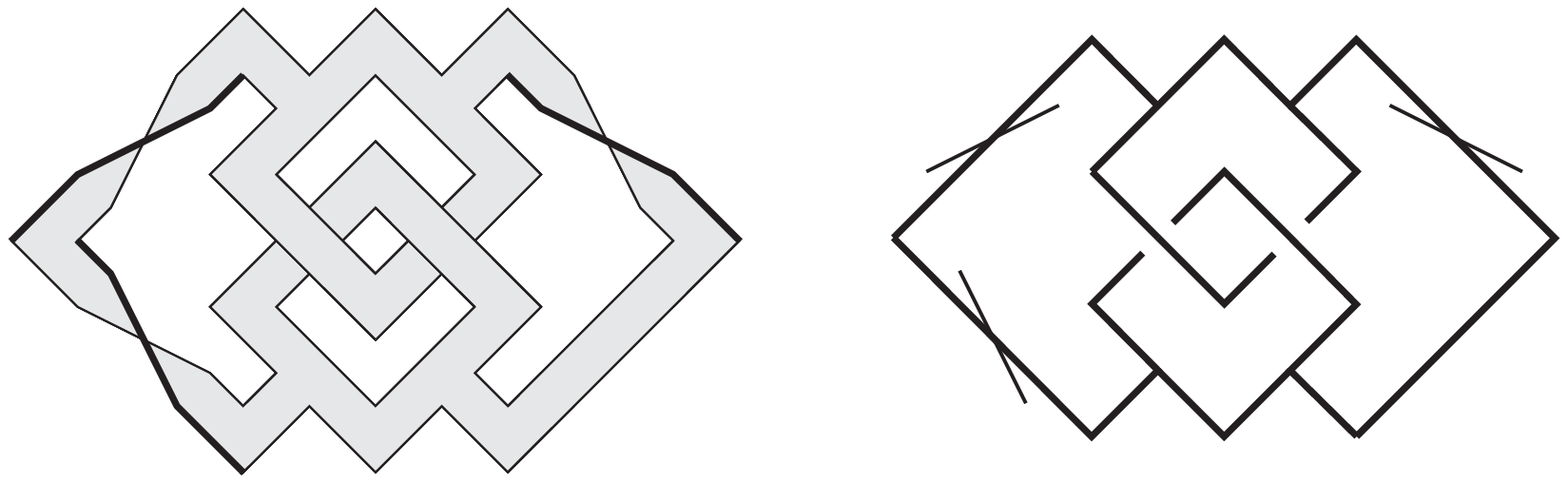}
\caption{A KTG and its diagram.}
\label{fig.ExampleKTG}
\end{center}
\end{figure}

Next we consider the moves that relate diagrams of isotopic KTGs. We will call these moves the trivalent isotopy moves. In addition to the usual Reidemeister moves for framed links we have moves related to the trivalent vertex and the
half-integral framing. These additional moves are called the fork slide, trivalent twist, twist slide and addition of twists, see figure \ref{fig.TrivalentIsotopy}.

\begin{figure}[here]
\begin{center}
\includegraphics[width = 12cm]{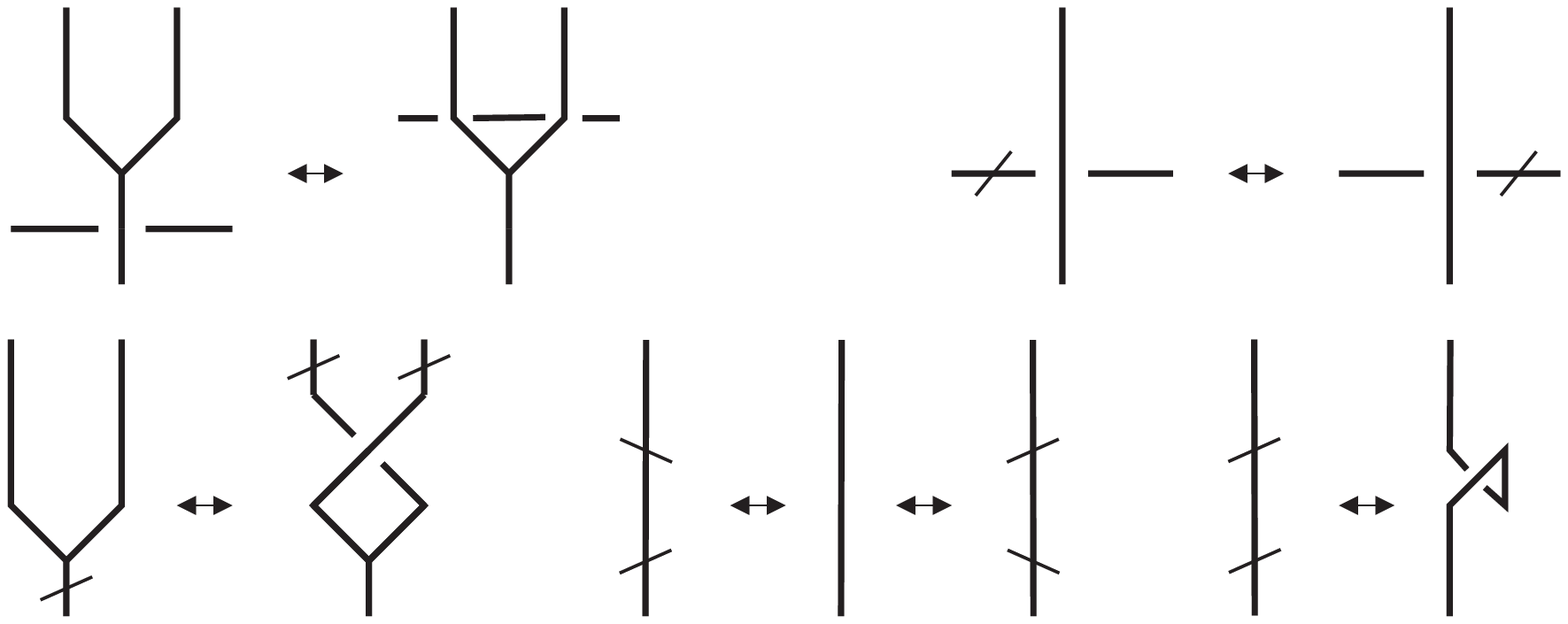}
\caption{The additional trivalent isotopy moves on a KTG diagram. First row: the fork slide and the twist slide. Second row: the trivalent twist and the addition of twists (multiple cases).}
\label{fig.TrivalentIsotopy}
\end{center}
\end{figure}

\begin{df}
\label{df.TrivalentIsotopy}
The trivalent isotopy moves are the Reidemeister moves for framed links and the following four moves on KTG diagrams: 
\begin{enumerate}
\item{Let the fork slide be the move where a strand is slid over or
under a trivalent vertex (first picture of figure \ref{fig.TrivalentIsotopy}).}
\item{One can slide a half twist past a crossing (second picture of figure \ref{fig.TrivalentIsotopy}). This is called the twist slide.} 
\item{The trivalent twist is the move where a single half twist is moved past a trivalent vertex. It starts on one edge, passes the vertex, creates a crossing and one half twist on the other two edges (third picture of figure \ref{fig.TrivalentIsotopy}). The sign of the initial half twist equals the sign of the crossing and the two ensuing half twists.}
\item{One may cancel or create two half twists of opposite sign on the same edge. Two half twists of equal sign on the same edge may be replaced by a curl of the same sign on that edge (last pictures of figure \ref{fig.TrivalentIsotopy}). This is called addition of twists.}
\end{enumerate}
\end{df}

\noindent The same arguments that are used in the proof of Reidemeister's theorem can be employed to prove the following theorem, see also \cite{Turaev}.

\begin{st}
\label{fig.TrivalentReidemeister}
Two KTG diagrams define isotopic KTGs if and only if the diagrams are related by trivalent isotopy moves.
\end{st}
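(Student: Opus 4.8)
The plan is to mimic the proof of Reidemeister's theorem, now applied to the projection of a framed trivalent graph. The ``if'' direction is the easy half: one checks move by move that each of the four additional moves of Definition \ref{df.TrivalentIsotopy} (fork slide, twist slide, trivalent twist, addition of twists), like the framed Reidemeister moves, is realized by an explicit ambient isotopy of the fat graph in $\mathbb{S}^3$. I would exhibit these isotopies directly from the local pictures in Figure \ref{fig.TrivalentIsotopy}; since isotopy is an equivalence relation, diagrams related by a finite sequence of moves then define isotopic KTGs.

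The substance is the ``only if'' direction. Given isotopic KTGs $K_0$ and $K_1$, I would choose a smooth ambient isotopy $K_t$ between them and carry along a smooth normal framing recording the ribbon structure. After a small perturbation one puts the whole family $\{K_t\}$ in general position with respect to the fixed projection $\pi$: for all but finitely many times $t$ the projection $\pi(K_t)$ is a regular KTG diagram, and at finitely many critical times $t_1<\dots<t_k$ exactly one codimension--one degeneracy of the projection (or of the framing) occurs. Between consecutive critical times the diagram changes only by an isotopy of the plane, which does not alter its combinatorial type.

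The key step is the classification of the codimension--one degeneracies and the identification of each with one of the listed moves. The degeneracies that involve neither a vertex nor the framing are exactly the classical ones: a vertical tangent of an edge gives Reidemeister I, a tangency of two branches gives Reidemeister II, and a triple point gives Reidemeister III. The new phenomena, caused by the trivalent vertices and the half--integral framing, are: a strand sweeping across a trivalent vertex, giving the fork slide; a half--twist marker sweeping through a crossing, giving the twist slide; a half--twist marker sweeping through a vertex, giving the trivalent twist; and the framing vector becoming vertical, which creates or cancels half twists and converts a pair of equal twists into a curl, giving the addition of twists. Reading off these events in order produces a sequence of trivalent isotopy moves carrying $D_0$ to $D_1$.

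I expect the main obstacle to be the bookkeeping of the framing, that is, the fat--graph (ribbon) structure, rather than the combinatorics of the vertices, which is a routine extension of the link case. One must show first that every ambient isotopy can be made generic in the strong sense that the framing degenerates only in the simple ways above, and second that the cumulative winding of the framing relative to the blackboard framing is captured entirely by the half--twist markers modulo the addition--of--twists and curl moves. Verifying that the four additional moves together with the framed Reidemeister moves form a \emph{complete} list---that no further codimension--one degeneracy has been overlooked---is the crux, and is where the argument genuinely extends, rather than merely repeats, Reidemeister's theorem; the framed ribbon--graph version treated in \cite{Turaev} provides the template for making this enumeration rigorous.
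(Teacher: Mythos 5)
Your proposal follows exactly the route the paper intends: the paper offers no written proof beyond the remark that "the same arguments as in the proof of Reidemeister's theorem" apply, with a pointer to \cite{Turaev}, and your general-position argument with the enumeration of codimension-one degeneracies (tangencies, triple points, strand--vertex, twist--crossing, twist--vertex, and framing degenerations) is precisely that argument spelled out. You also correctly identify the crux --- completeness of the list of local catastrophes for the framing and the vertices --- which is the part the paper delegates to Turaev's treatment.
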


\subsection{KTG moves}
\label{sub.KTGmoves}

We now take a closer look at the KTG moves defined in the introduction (Definition \ref{df.KTGmoves}). We will give a proof of Theorem \ref{st.KTGgen} that states that any KTG can be generated from the standard tetrahedron (see figure \ref{fig.KTGmoves}) using the KTG moves.

It is important to note that the result of an unzip move is determined by the number of half twists present on the edge. Technically such half twists have to be pushed off the edge before one can perform the unzip. In practice it is however much easier to remember that $n$ half twists on an edge give rise to $n$ crossings between the two parallel edges produced by the unzip. This follows from the trivalent isotopy moves defined above. Alternatively it can be checked physically by cutting a twisted band into two pieces along its core.

\begin{figure}[here]
\begin{center}
\includegraphics[width = 12 cm]{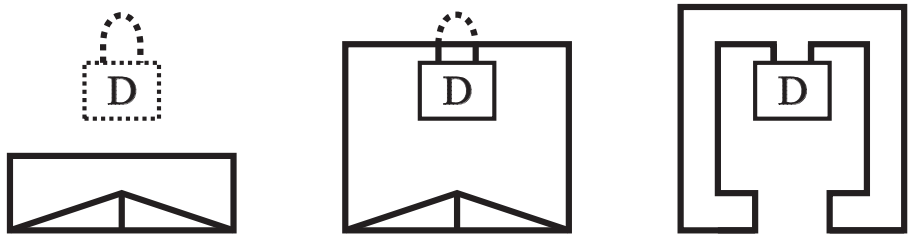}
\caption{Generating an arbitrary diagram $D$ from the tetrahedron by sweep-out.}
\label{fig.Sweepout}
\end{center}
\end{figure}

\begin{proof} (of Theorem \ref{st.KTGgen}).
We start with the diagram $D$ of the KTG that we want to generate drawn hatched in the first picture of figure \ref{fig.Sweepout}. Below it we draw a standard tetrahedron in black. The hatched part of the picture still needs to be generated and the black part is already done. 

We generate the diagram $D$ from the topmost edge of the tetrahedron step by step using the elementary steps depicted in figure \ref{fig.ElementarySteps}. The edge of the tetrahedron moves upwards over the hatched diagram $D$ and at every step we delete the hatched part of $D$ that is covered and regenerate it by one of the moves indicated in figure \ref{fig.ElementarySteps}. 

The elementary moves $A$, $H_{\pm}$ and $U$ in figure \ref{fig.ElementarySteps} are the KTG moves, and the moves $B$ and $C$ are a composition of KTG moves, see figure \ref{fig.Dewanagri} for a proof. The last step in the derivation of the move $C$ consists of unzipping the half twisted edge. To do this one can either cut the edge along its core or first isotope the half twist up to get a crossing.

\begin{figure}[here]
\begin{center}
\includegraphics[width = 12 cm]{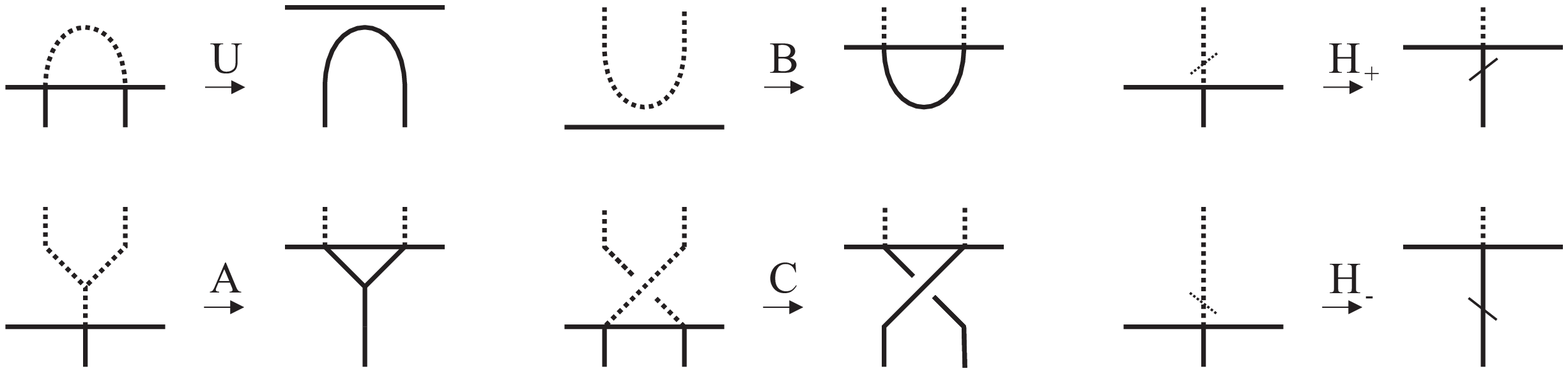}
\caption{The elementary steps encountered by the edge of the tetrahedron. The hatched parts are only meant to indicate the course of action, these parts are not actually there. With this in mind one recognizes the $U$ in the first picture as the unzip move.}
\label{fig.ElementarySteps}
\end{center}
\end{figure}

\noindent We stop the sweep-out process right before reaching the last hatched maximum of $D$, as indicated in the middle picture in figure \ref{fig.Sweepout}. To close the diagram we remove this maximum and unzip the three vertical edges of the tetrahedron to obtain the required diagram, see the last picture in figure \ref{fig.Sweepout}.

\begin{figure}[here]
\begin{center}
\includegraphics[width = 12 cm]{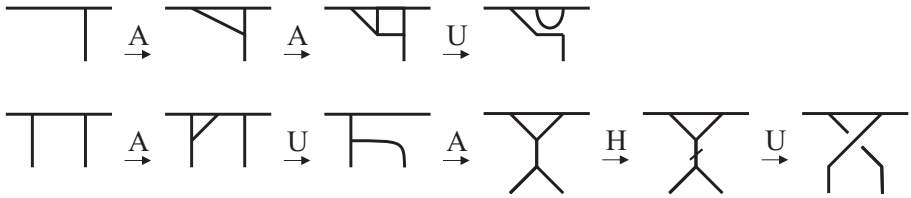}
\caption{A derivation of the move $B$ from the KTG moves (first row) and a derivation of $C$ from the KTG moves (second row).}
\label{fig.Dewanagri}
\end{center}
\end{figure}
\end{proof}

\noindent There are many ways to produce the same KTG using the KTG moves. For example if one starts with a single trivalent vertex and applies the triangle move then one can proceed in two ways to produce the same diagram. Either perform a single triangle move on the top vertex,
or do two triangle moves on the two lower vertices followed by an unzip on the middle edge at the bottom.

\section{The colored Jones invariant of a KTG}
\label{sec.ColoredJones}

Our definition and calculation of the colored Jones invariant will be based on the Kauffman bracket and its skein relation. We have chosen this language over the more general representation theoretic language because its formulas do not require a preferred direction in the projection plane. Throughout we will make use of the variable $A$ from skein theory. It is related to the $q$ from the introduction by $A^4 = q$.

\subsection{M\"{o}bius Skein Theory}
\label{sub.SkeinTheory}

To be able to include diagrams with half twisted edges we need to extend the usual skein theory a little. We propose to introduce the following extra relations called the half twist relations. A single edge with a positive half twist is equal to $(-A^3)^{1/2}$ times an untwisted edge. A single edge with a negative half twist is equal to $(-A^3)^{-1/2}$ times an untwisted edge. This definition is consistent with the value of the curl in ordinary skein theory and also with the trivalent isotopy move addition of twists from section \ref{sec.KTGs}.

\begin{figure}[here]
\begin{center}
\includegraphics[width = 12cm]{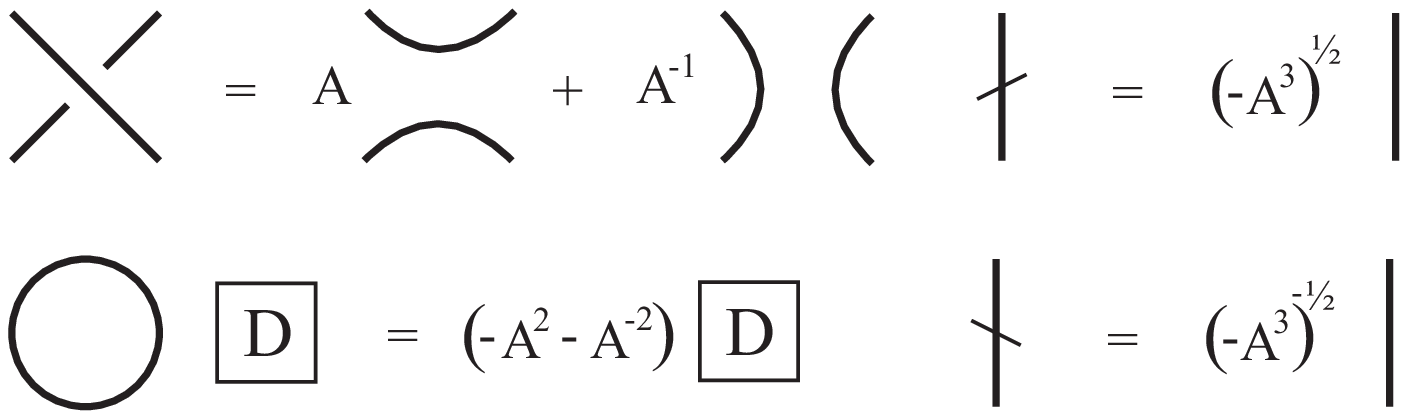}
\caption{The Kauffman relations and the additional twist relations together make up M\"{o}bius Skein Theory.}
\label{fig.Kauffman}
\end{center}
\end{figure}

\begin{df}
\label{df.Skein}
Let $\mathcal{R}$ be the quotient field of the ring of rational Laurent polynomials in $A^{1/2}$. Define the M\"{o}bius skein of a surface $\Sigma$ to be the $\mathcal{R}$-vector space of KTG diagrams without vertices in $\Sigma$ modulo the Kauffman bracket relations and the half twist relations shown in figure \ref{fig.Kauffman}.
\end{df}

\noindent The surface is allowed to have marked points on its boundary but in this case we only allow diagrams that have edges ending at all the boundary points.

Note that the above definition coincides with the usual definition of a skein space except for the half twist relations. A KTG diagram without vertices or half twists can be given the blackboard framing and its value in the M\"{o}bius skein will be exactly its value in the ordinary skein space.

We can now define the colored Jones invariant of a KTG using the notion of a Jones--Wenzl idempotent and a trivalent skein vertex, see \cite{MasbaumVogel}.

\begin{df} 
\label{df.ColoredJones}
Define the unnormalized $N$--colored Jones invariant $\langle \Gamma \rangle_N(A)$ of a KTG $\Gamma$ to be the Kauffman
bracket of the M\"{o}bius skein element obtained from a diagram of $\Gamma$ in the plane by
replacing every edge by $N-1$ parallel edges joined by a $N-1$--th Jones--Wenzl idempotent and every vertex by a trivalent skein vertex.

More generally we also define the bracket of a KTG diagram with integer labels on the edges to be the bracket of the skein element obtained by replacing an edge labeled $B$ by a $B-1$--th Jones--Wenzl idempotent and the vertices by the appropriate trivalent skein vertices. 
\end{df}

\noindent In this definition $\langle \Gamma \rangle_2$ coincides with the usual Kauffman bracket. As an example we note that if $M$ is the positive M\"{o}bius band then $\langle M \rangle_3 = -(A^8+A^4+1)$.

Note that replacing an $N$--colored edge with a half twist by parallel strands will cause the $N-1$ parallel edges to be intertwined and individually half twisted so that we get additional crossings and half twists. 

Since there is no planar way to connect an odd number of incoming edges, the trivalent vertex is defined to be zero when all edges have even colors. Therefore the colored Jones invariant of any KTG with at least one vertex is also zero for even $N$. In the next section we will see that at the $4N$--th root of unity this is the case for all augmented KTGs.

For the above definition to make sense we still have to prove that the value of $\langle \Gamma \rangle_N$ does not depend on the particular KTG diagram we choose for $\Gamma$. For this we first need a fairly standard lemma on the half twist, see also the last diagram in figure \ref{fig.Recoupling}.

\begin{lem}
\label{lem.HalfTwist}
A positive half twist on $n$ bands on top of an $n$--th Jones--Wenzl idempotent is equal to $(-1)^\frac{n}{2}A^{\frac{n(n+2)}{2}}$ times the untwisted bands with the same idempotent at the bottom. For the negative half twist we get $(-1)^{-n/2}A^{-n(n+2)/2}$ in the same way. 
\end{lem}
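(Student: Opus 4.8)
The plan is to split the positive half twist on the $n$-band ribbon into the two independent features it produces once the bands are drawn as individual strands, exactly as remarked above: the $n$ cores become \emph{intertwined} according to the half-twist permutation braid $\Delta_n$, and each core is \emph{individually} half twisted. I would treat these two contributions separately and multiply them at the end, since both sit on top of the same Jones--Wenzl idempotent and can be simplified independently.

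First I would dispose of the $n$ individual framings. Applying the M\"obius half-twist relation of Figure \ref{fig.Kauffman} to each of the $n$ bands in turn pulls out one factor of $(-A^3)^{1/2}$ per band and leaves those bands untwisted. This contributes
$$(-A^3)^{n/2} = (-1)^{n/2}A^{3n/2},$$
after which only the pure braid $\Delta_n$ on $n$ untwisted strands remains sitting on top of the idempotent.

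Next I would resolve the braiding. The half-twist braid $\Delta_n$ has exactly $\binom{n}{2}$ positive crossings. Expanding each crossing by the Kauffman relation produces an identity smoothing (weighted by $A$) and a turnback smoothing (weighted by $A^{-1}$). Every turnback is a Temperley--Lieb generator $e_i$, and since the idempotent is annihilated by each $e_i$, any term containing a turnback is killed by the flanking idempotents. Hence only the all-identity smoothing survives; it returns the untwisted idempotent and carries the weight $A^{\binom{n}{2}} = A^{n(n-1)/2}$.

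Multiplying the two contributions gives
$$(-1)^{n/2}A^{3n/2}\cdot A^{n(n-1)/2} = (-1)^{n/2}A^{n(n+2)/2},$$
as claimed; the negative half twist is identical after sending $A \mapsto A^{-1}$ and reversing the sign of each framing, yielding $(-1)^{-n/2}A^{-n(n+2)/2}$. The step needing the most care is the braiding computation: I must argue rigorously that every smoothing other than the all-identity one lies in the two-sided ideal generated by the $e_i$ and therefore vanishes against the idempotent, so that the surviving coefficient is \emph{exactly} $A^{\binom{n}{2}}$ with no stray contribution from some other smoothing reconnecting to the identity matching. A useful consistency check on the final constant is that its square, $(-1)^n A^{n(n+2)}$, is the known twist (ribbon) coefficient of the $n$-th Jones--Wenzl idempotent, as it must be since two half twists compose to a full twist.
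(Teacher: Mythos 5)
Your proof is correct and follows essentially the same route as the paper's: factor out one $(-A^3)^{1/2}$ per band via the half twist relation, observe that the Jones--Wenzl idempotent kills every resolution of the $n(n-1)/2$ crossings except the all-identity smoothing, and multiply $(-1)^{n/2}A^{3n/2}$ by $A^{n(n-1)/2}$. The only difference is that you flag (correctly, and slightly more carefully than the paper) the need to check that no other smoothing can reproduce the identity Temperley--Lieb diagram; this is immediate since any smoothing containing a turnback lies in the ideal generated by the $e_i$ and is annihilated by the idempotent.
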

\begin{proof}
Because of the Jones--Wenzl idempotent there is only one way to resolve the crossings in the diagram that will give a non-zero contribution. A half twist on $n$ parallel bands produces $n(n-1)/2$ positive crossings yielding a contribution $A^{n(n-1)/2}$. Furthermore every strand contains a positive half twist, so the half twist relation gives another contribution of $(-1)^{n/2}A^{3n/2}$. Together this is exactly $(-1)^{n/2}A^{n(n+2)/2}$ as required. For the negative half twist the proof is the same.
\end{proof}

\begin{prop}
\label{prop.WellDefined}
The unnormalized $N$--colored Jones invariant $\langle \Gamma \rangle_N(A)$ of a KTG $\Gamma$ is a well defined invariant of KTGs.
\end{prop}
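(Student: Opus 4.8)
The plan is to reduce the statement to a finite check. By Theorem \ref{fig.TrivalentReidemeister} two KTG diagrams represent isotopic KTGs precisely when they are related by a finite sequence of trivalent isotopy moves, so it suffices to show that the skein element of Definition \ref{df.ColoredJones}, and hence its Kauffman bracket, is unchanged under each of the moves listed in Definition \ref{df.TrivalentIsotopy}: the framed Reidemeister moves, the fork slide, the twist slide, the trivalent twist, and the addition of twists. Since both the Kauffman bracket and the cabling-by-idempotents operation are $\mathcal{R}$--linear and local, in each case it is enough to compare the two local skein elements obtained by cabling the two sides of the move.

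For the framed Reidemeister moves and the fork slide I would invoke the standard theory of the Jones--Wenzl idempotent. Its defining properties---that it is annihilated by any turnback and absorbs parallel cablings---give invariance under Reidemeister II and III exactly as in ordinary colored skein theory, while the framed Reidemeister I move produces only the usual framing scalar that is already built into the blackboard--framed cabling. The fork slide, in which a cabled strand passes a trivalent skein vertex, is the sliding property of the trivalent vertex and follows from the same idempotent identities (see \cite{MasbaumVogel}); I would quote it rather than rederive it.

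The new content, and where Lemma \ref{lem.HalfTwist} does the work, is the three moves involving half twists. For the addition of twists I would apply the half twist relations directly: cabling an edge carrying two half twists of opposite sign produces, by Lemma \ref{lem.HalfTwist}, reciprocal scalars $(-1)^{\pm n/2}A^{\pm n(n+2)/2}$ that cancel, and two half twists of equal sign reduce to the cabled curl with the matching framing scalar. The twist slide is handled by combining Lemma \ref{lem.HalfTwist} with the crossing resolutions, checking that moving a half twist past a crossing leaves the total scalar unchanged.

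The main obstacle is the trivalent twist, in which a half twist on one edge of color $a$ is converted into a crossing together with half twists on the two edges of colors $b$ and $c$ meeting it at the vertex. Here I would cable all three edges, apply Lemma \ref{lem.HalfTwist} to each half twist to extract the scalars $(-1)^{a/2}A^{a(a+2)/2}$, $(-1)^{b/2}A^{b(b+2)/2}$ and $(-1)^{c/2}A^{c(c+2)/2}$, and resolve the induced crossing between the $b$-- and $c$--cables. The claim then reduces to the identity that the twist scalar on the incoming edge equals the product of the twist scalars on the two outgoing edges times the scalar produced by the full crossing between them; this is the balancing (ribbon) relation at a trivalent vertex, and verifying that the admissibility constraint among $a,b,c$ makes these exponents match is the one computation I expect to require genuine care. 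Once this identity is established, invariance under all trivalent isotopy moves follows and the proposition is proved.
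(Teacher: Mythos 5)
Your proposal is correct and follows essentially the same route as the paper: reduce to invariance under the trivalent isotopy moves, dispose of the Reidemeister moves and the fork slide via the standard Jones--Wenzl idempotent properties, use Lemma \ref{lem.HalfTwist} for the twist slide and addition of twists, and settle the trivalent twist by the vertex twist identity, which the paper simply cites as Theorem 3 of \cite{MasbaumVogel} rather than verifying the exponent bookkeeping you outline.
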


\begin{proof}
We need to check that the value of the unnormalized colored Jones invariant is unchanged under the trivalent isotopy moves of KTG diagrams defined in definition \ref{df.TrivalentIsotopy}. For the Reidemeister moves this is clear. Because a trivalent vertex is turned into a skein element without trivalent vertices or half twists, invariance under the fork slide move follows from invariance under Reidemeister II and III. 

Lemma \ref{lem.HalfTwist} proves the invariance of the Jones under the twist slide move and the addition of half twists. Invariance under the trivalent twist move now follows from this lemma in combination with Theorem 3 of \cite{MasbaumVogel}. 
\end{proof}

\noindent Note that the above proof also shows that the bracket of a KTG whose edges are colored by any integers is an invariant. This invariant is multiplicative under distant union.

To relate our definition of the unnnormalized colored Jones invariant to the ones that can be found in the literature we note that when $\Gamma$ is a link it coincides with $(-1)^{N-1}$ times the value of the unnormalized Jones invariant defined in \cite{Masbaum}. This follows from the remark that the bracket of a KTG diagram without half twists or vertices equals the bracket of the framed link in the usual skein theory.

The normalization of the Jones invariant that is used in the $\mathrm{so}(3)$ volume conjecture (Conjecture \ref{conj.So3volconj}) is defined as follows.

\begin{df}
\label{df.ColoredJonesNormalized}
Define the normalized colored Jones invariant of a KTG $\Gamma$ with $s$ split components to be $J_N(\Gamma) = \langle \Gamma \rangle_N/\langle U^s \rangle_N$, where $U^s$ is the $s$-component unlink.
\end{df}

\noindent For the volume conjecture we need to specialize to $A = \exp(\pi i/2N)$ but $\langle U^s \rangle_N = (-1)^{s(N-1)}[N]^s$, where $[N] = (A^{2N}-A^{-2N})/(A^{2}-A^{-2})$. At this value of $A$ we have $[N] = 0$ so we have check that we can divide out this pole and still get a well defined answer. 

\begin{prop}
\label{prop.WellDefinedNormalized}
The normalized $N$--colored Jones invariant has a well defined value at $A = \exp(\pi i/2N)$.
\end{prop}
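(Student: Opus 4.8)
The plan is to split the verification into a finiteness part (the rational function $\langle\Gamma\rangle_N$ has no pole at $A=\zeta:=\exp(\pi i/2N)$) and a vanishing part (its order of vanishing at $\zeta$ is at least $s$, so that the simple poles contributed by the $s$ factors of $[N]$ in $\langle U^s\rangle_N$ are cancelled). First I would record the elementary computation $[k]=\sin(k\pi/N)/\sin(\pi/N)$ at $A=\zeta$, which shows that $[k]\neq 0$ for $1\le k\le N-1$ while $[N]$ has a simple zero; hence $\langle U^s\rangle_N=(-1)^{s(N-1)}[N]^s$ has a zero of order exactly $s$. Using that the bracket is multiplicative under distant union (the remark following Proposition \ref{prop.WellDefined}), I would then reduce to a single split component: writing $\Gamma=\Gamma_1\sqcup\cdots\sqcup\Gamma_s$ gives $J_N(\Gamma)=\prod_i \langle\Gamma_i\rangle_N/\langle U\rangle_N$, so it suffices to treat one non-split $\Gamma_i$ and divide by a single $[N]$.

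For finiteness I would argue straight from Definition \ref{df.ColoredJones}. Expanding each Jones--Wenzl idempotent $f_{N-1}$ and each trivalent skein vertex, $\langle\Gamma_i\rangle_N$ becomes a $\mathbb{Z}[A^{\pm 1/2}]$--linear combination of brackets of crossingless diagrams, where the circle values $\delta=-A^2-A^{-2}$ and the crossing resolutions contribute only Laurent monomials. The \emph{only} denominators that occur are the entries of $f_{N-1}$, which are products of the $[k]$ with $1\le k\le N-1$. Since all of these are nonzero at $\zeta$, the rational function $\langle\Gamma_i\rangle_N$ is regular at $\zeta$. It is essential here to work from the direct state--sum definition rather than from a recoupling evaluation: the latter would divide by theta symbols and could in principle introduce a factor $[N]$ in a denominator.

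It remains to produce the zero. I would cut one edge of the connected diagram $\Gamma_i$; what is left is a tangle on $N-1$ strands whose top and bottom ends carry $f_{N-1}$, i.e.\ an element of $f_{N-1}\,\mathrm{TL}_{N-1}\,f_{N-1}$. Over the field $\mathcal{R}$ the idempotent $f_{N-1}$ is minimal, so this space is one--dimensional and the cut diagram equals $\mu_i f_{N-1}$ for a scalar $\mu_i\in\mathcal{R}$. Comparing the coefficient of the identity Temperley--Lieb diagram (which is $1$ in $f_{N-1}$) identifies $\mu_i$ with that same coefficient in the cut diagram, which by the previous paragraph is regular at $\zeta$. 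Re-closing the edge multiplies by the quantum dimension, giving $\langle\Gamma_i\rangle_N=\mu_i\,\Delta_{N-1}=\mu_i(-1)^{N-1}[N]$; hence $J_N(\Gamma_i)=\pm\mu_i$ is regular at $\zeta$, and by multiplicativity so is $J_N(\Gamma)$. (When $N$ is even and $\Gamma_i$ has a vertex the bracket is identically zero, so nothing is to prove.) The main obstacle is exactly this last bookkeeping: showing that the single quantum--dimension factor $\Delta_{N-1}=(-1)^{N-1}[N]$ extracted from each split component accounts for the entire order--$s$ zero of $\langle U^s\rangle_N$, while simultaneously guaranteeing---through the state--sum description of the coefficient $\mu_i$---that no compensating factor of $[N]$ is hidden in a denominator.
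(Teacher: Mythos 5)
Your proposal is correct and follows essentially the same route as the paper: reduce to one split component by multiplicativity, cut an edge to view the diagram as a $1$--$1$ tangle lying in $f_{N-1}\,\mathrm{TL}_{N-1}\,f_{N-1}$, write it as a scalar times the Jones--Wenzl idempotent, and observe that this scalar (the paper's $f_N(A)$, your $\mu_i$) has denominators coming only from the idempotent coefficients $[k]$ with $k\leq N-1$, all nonzero at $A=\exp(\pi i/2N)$, while re-closing contributes exactly the factor $\langle U\rangle_N$ that the normalization divides out. Your additional remarks (the explicit $\sin$ computation, the one-dimensionality of the sandwiched algebra, and the warning against recoupling evaluations that divide by theta symbols) only make the same argument more explicit.
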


\begin{proof}
Since the unnormalized colored Jones invariant is multiplicative under distant union, the normalized colored Jones invariant also has this property. Therefore we can assume that the number of split components of our KTG $\Gamma$ is $1$. Let $\Gamma$ be the closure of a 1--1 tangle $\Theta$. We label the edges of $\Theta$ with $N$ and interpret $\Theta$ as an element of the M\"{o}bius skein of a square with $2N-2$ marked boundary points. As in the Temperley--Lieb algebra we can now write $\Theta$ as a scalar $f_N(A)$ times the $N-1$th Jones--Wenzl idempotent. Closing the tangle $\Theta$ we find that $\langle \Gamma \rangle_N = \langle U \rangle_N f_N(A)$. 

It now remains to show that $f_N(A)$ is a quotient of Laurent polynomials in $A^{1/2}$ whose denominator is not zero at $A = \exp(\pi i/2N)$. To calculate $f_N(A)$ we expand all crossings and half twists in $\Theta$ so as to obtain an element of the Temperley--Lieb algebra and the component of the identity in this expression is $f_N(A)$. The calculation of $f_N(A)$ will involve the Jones--Wenzl idempotents, the skein relation and the half twist relations. From the recursive definition of the Jones--Wenzl idempotent it is clear that $f_N(A)$ is a quotient of Laurent polynomials in $A^{1/2}$ whose denominator does not have poles at $A = \exp(\pi i/2N)$. 
\end{proof}

\noindent It follows from our discussion that the normalized colored Jones invariant is multiplicative under both connected sum and distant union of KTG diagrams. To see the multiplicativity with respect to connected sum we observe that it corresponds to concatenation of 1--1 tangles and hence to multiplication of scalars.

We now move on to the problem of calculating the unnormalized colored Jones invariant of a general KTG. Theorem \ref{st.KTGgen} tells us that all KTGs can be constructed from the standard tetrahedron by applying the KTG moves. It turns out that in skein theory the KTG moves correspond to the well known formulas shown in figure \ref{fig.Recoupling}, see also \cite{MasbaumVogel}. We will show below that these formulas can be used to calculate the colored Jones polynomial of any KTG from a sequence of KTG moves generating it.

\begin{figure}[h]
\begin{center}
\includegraphics[width = 12 cm]{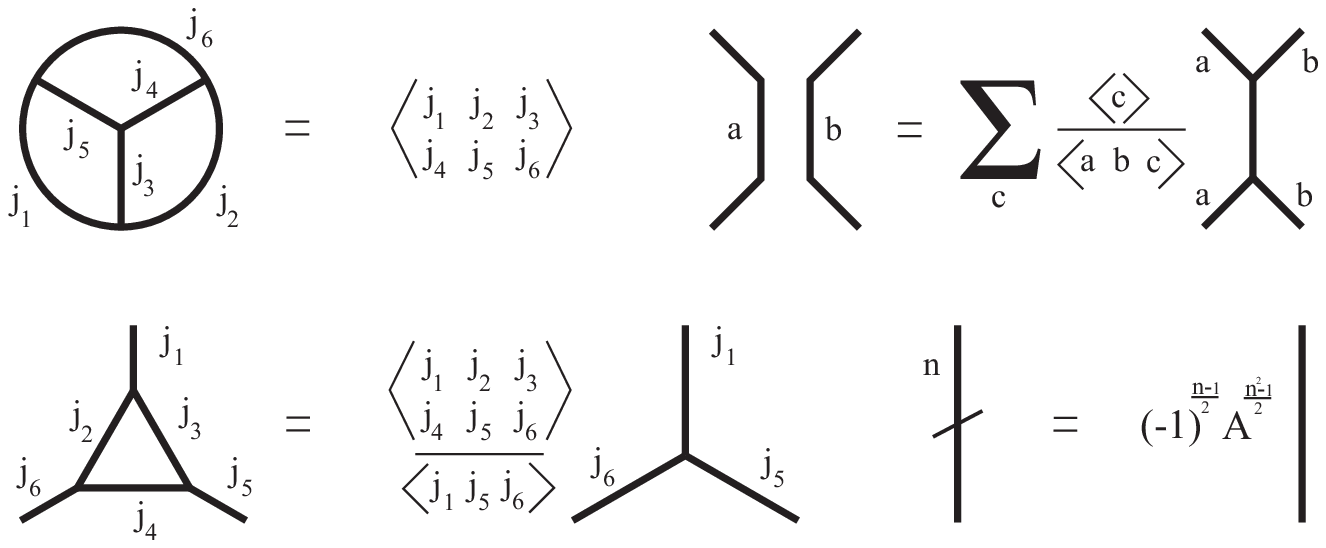}
\caption{The value of the skein of the labeled standard tetrahedron is the six-j symbol defined below. The fusion formula reverses the unzip move, the half twist formula undoes the half twist move and the triangle formula undoes the triangle move.}
\label{fig.Recoupling}
\end{center}
\end{figure}

To be able to write down the formulas for the six-j symbols shown in figure \ref{fig.Recoupling} we first recall the definition of a quantum integer $[n]= \frac{A^{2n}-A^{-2n}}{A^2-A^{-2}}$. The value of the unknot is $\langle U \rangle_N = \langle N \rangle = (-1)^{N-1}[N]$. Quantum factorials and binomial coefficients are defined in the usual way in terms of the quantum integers. 

Given six integer labels $j_1,...,j_6$ on the edges of a tetrahedron as in figure \ref{fig.Recoupling} such that all trivalent vertices are non-zero, define $V_1,V_2,V_3,V_4$ to be a half times the sums of the three labels around each of the four vertices. For example $V_1  = (j_1+j_2+j_3)/2$. Also define $\Box_1,\Box_2,\Box_3$ to be a half of the sums of the labels in the three squares (pairs of opposite edges). According to \cite{MasbaumVogel} the value of the tetrahedron is: $$\left\langle\begin{array}{ccc} j_1 & j_2 & j_3\\ j_4 & j_5 & j_6\end{array}\right\rangle \quad \mathrm{where} \quad \left\langle\begin{array}{ccc} j_1+1 & j_2+1 & j_3+1\\ j_4+1 & j_5+1 & j_6+1\end{array}\right\rangle =$$ $$ \frac{\prod_{m,n}(\Box_m-V_n)}{\prod_{k=1}^6 [j_k]!}\sum_{z=\max V_i}^{\min\Box_j}\frac{(-1)^z[z+1]!}{\prod_{r}(\Box_r-z)\prod_{s}(z-V_s)}$$

\noindent The value of the labeled theta graph is given by $\langle a\ b\ c \rangle$, where $$\langle a+1\ b+1\ c+1 \rangle = (-1)^{s}\frac{[s+1]![s-a]![s-b]![s-c]!}{[a]![b]![c]!} \quad \mathrm{where} \ s = \frac{a+b+c}{2}$$

\noindent The sum in the upper right equation in figure 8 ranges over all possible triples for which the trivalent vertex is nonzero, that is all $c$ such that $|a-b|\leq c \leq a+b$ and $a+b+c$ is odd. It should be remarked that since we replace an edge labeled $b$ by a $b-1$--th Jones--Wenzl idempotent while they are replaced by $b$--th Jones--Wenzl idempotents in \cite{MasbaumVogel} there is a slight shift of indices.  

The formulas in figure \ref{fig.Recoupling} suffice to give a formula for the colored Jones invariant of any KTG in terms of the six-j symbols. By theorem \ref{st.KTGgen} we know that any KTG $\Gamma$ can be constructed from the tetrahedron by a sequence $S$ of KTG moves. To calculate $\langle \Gamma \rangle_N$ we start with the diagram corresponding to $S$ and label all edges by $N$. Now we reverse the KTG moves in $S$ move by move. At every step we keep track of the newly produced edge labels in the diagrams that we get. The formulas in figure \ref{fig.Recoupling} tell us that we get a six-j symbol when we reverse the triangle move $A$, a summation with so called fusion coefficients from the unzip move $U$ and a factor from the half twist moves $H_{\pm}$. In the next subsection we will use this knowledge to calculate the colored Jones of an augmented KTG at the relevant root of unity.

Finally note that it is well known that the colored Jones invariant of knots and links is a Laurent polynomial. For KTGs this is generally not the case. The colored Jones invariant (normalized or not) of a $KTG$ is merely a quotient of Laurent polynomials in $A^{1/2}$. As an example let us calculate the normalized colored Jones invariant of the theta graph $\theta$. From the formula in figure \ref{fig.Recoupling} we get: $$J_N(\theta) = (-1)^{3k} \frac{[3k+1]![k]!^3}{[2k]!^3[2k+1]} \quad \quad N = 2k+1$$ 

\noindent By considering the zeros of the numerator and the denominator it is clear that this is not a Laurent polynomial for odd $N$ greater than $3$. For example we can look at the number of zeros at $A = \exp(i\pi/4k)$.

\subsection{Asymptotics and augmentation}
\label{sub.AsymptoticsAugmentation}

We have seen that the unnormalized $N$--colored Jones invariant takes the form of a multi-sum of products and quotients of
quantum integers. Every unzip contributes a summation with fusion
coefficients, every triangle move produces a six-j symbol and every
half twist move contributes a power of $A$. 

It is not trivial to determine the asymptotics of such a multisum formula. To circumvent this difficulty we augment the KTG. Adding extra unknotted ring-like components actually simplifies the sum at the relevant root of unity because of the following formula from skein theory \cite{Lickorish}, see figure \ref{fig.Ring}. The value of a $k-1$--th Jones--Wenzl idempotent encircled by a closed $N-1$--th idempotent is $(-1)^{N-1}[k N]/[k]$ times the idempotent. 

\begin{figure}[here]
\begin{center}
\includegraphics[width = 12cm]{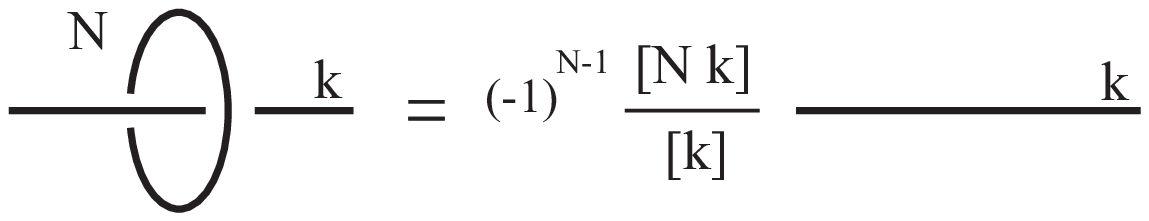}
\caption{The effect of adding a ring to a labeled edge. Note that every edge is replaced by a Jones--Wenzl idempotent.}
\label{fig.Ring}
\end{center}
\end{figure}

\noindent The following lemma gives a calculation of the above value at our root of unity. 

\begin{lem}
\label{lem.Ring}
In skein theory adding a ring labeled $N$ encircling an edge labeled $k$ is the same as multiplying the edge by $(-1)^{N-1}[k N]/[k]$. The value of this constant is
\[\lim_{A \to e^{\pi i/2N}}(-1)^{N-1}\frac{[k N]}{[k]} = \left\{
    \begin{array}{ll}
        (-1)^{N-1+k-k/N}N & \mbox{$\mathrm{if}$ $N \mid k$}\\
        0 & \mbox{$\mathrm{if}$ $N \nmid k$}
    \end{array}\right. \]

\end{lem}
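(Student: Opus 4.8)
The plan is to reduce the ratio $[kN]/[k]$ to a single quotient of exponentials, evaluate it directly at $A = e^{\pi i/2N}$, and treat separately the case where the limit is a genuine value and the case where it is an indeterminate $0/0$. First I would write out the definition $[n] = (A^{2n}-A^{-2n})/(A^2-A^{-2})$ and cancel the common factor $A^2 - A^{-2}$ to obtain
$$\frac{[kN]}{[k]} = \frac{A^{2kN}-A^{-2kN}}{A^{2k}-A^{-2k}}.$$
The key numerical fact is that at $A = e^{\pi i/2N}$ one has $A^{2N} = e^{\pi i} = -1$, and hence $A^{2kN} = (A^{2N})^k = (-1)^k$ and likewise $A^{-2kN} = (-1)^k$. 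Therefore the numerator vanishes at the root of unity for every $k$.

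Next I would split into two cases according to divisibility. If $N \nmid k$, then $A^{2k} = e^{\pi i k/N} \neq A^{-2k}$, so the denominator is nonzero; the quotient is therefore $0$ at $A = e^{\pi i/2N}$, and multiplying by $(-1)^{N-1}$ leaves the value $0$, as claimed. If $N \mid k$, then $A^{2k} = A^{-2k} = (-1)^{k/N}$, so the denominator also vanishes and we face a genuine $0/0$ limit.

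To resolve the indeterminate case I would apply L'Hôpital's rule in the variable $A$. Differentiating the numerator and denominator gives $2kN(A^{2kN-1}+A^{-2kN-1})$ and $2k(A^{2k-1}+A^{-2k-1})$ respectively; factoring out the common $A^{-1}$ and the common factor $2k$, and substituting $A^{2kN} = (-1)^k$ and $A^{2k} = (-1)^{k/N}$, collapses the ratio to $N\bigl((-1)^k+(-1)^k\bigr)/\bigl((-1)^{k/N}+(-1)^{k/N}\bigr) = N(-1)^{k-k/N}$. Multiplying by the prefactor $(-1)^{N-1}$ then yields exactly $(-1)^{N-1+k-k/N}N$.

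The computation is entirely elementary; the only point that requires genuine care is the bookkeeping of the two independent signs $(-1)^k$ and $(-1)^{k/N}$ in the divisible case, since it is their ratio, not their product, that survives after the derivatives cancel. It is worth emphasizing that $k/N$ is an integer precisely when $N \mid k$, so the exponent $k - k/N$ is well defined exactly where it is used. An alternative to L'Hôpital would be to factor $A^{2kN}-A^{-2kN}$ as a product over the $N$-th roots and cancel the single vanishing factor against the denominator, but the derivative computation is the most direct route.
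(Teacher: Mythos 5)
Your proposal is correct and follows essentially the same route as the paper: write $[kN]/[k]$ as the single quotient $(A^{2kN}-A^{-2kN})/(A^{2k}-A^{-2k})$, observe the numerator always vanishes at $A=e^{\pi i/2N}$ while the denominator vanishes iff $N\mid k$, and resolve the $0/0$ case with L'H\^opital, landing on $(-1)^{N-1+k-k/N}N$. The sign bookkeeping matches the paper's computation exactly (and, like the paper, you take the skein-theoretic encircling formula itself as given from the literature), so there is nothing to add.
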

\begin{proof}
The value of $(-1)^{N-1}[k N]/[k] = (-1)^{N-1} \frac{A^{2kN}-A^{-2kN}}{A^{2k}-A^{-2k}}$ at $A=e^{\pi i/2N}$ depends on whether or not the denominator vanishes. The numerator is always zero but the denominator is zero if and only if $N \mid k$, therefore the value is $0$ if $N$ does not divide $k$. Using 'l Hospital's rule we calculate the value in case $N \mid k$.
$$\lim_{A \to e^{\pi i/2N}}(-1)^{N-1}\frac{[k N]}{[k]} = \lim_{A \to e^{\pi i/2N}}(-1)^{N-1}\frac{2kNA^{-1}}{2kA^{-1}}\frac{A^{2kN}+A^{-2kN}}{A^{2k}+A^{-2k}} =$$ $$(-1)^{N-1}\frac{2(-1)^{k}}{2(-1)^{k/N}} N = (-1)^{N-1+k-k/N}N$$
\end{proof}

\noindent The above lemma suggests that we can use an edge with a ring as a kind of delta function. In other words we can try to pick only the term $k = N$ from a sum over edges labeled $k$ by adding a ring to the edge. This will turn the expression of the colored Jones invariant into a single term thus making an asymptotic analysis possible. To make this idea precise we need to be careful because of poles in the six-j symbols and the possibility of several multiples of $N$ dividing $k$. This is done in the proof of part 1) of the main theorem that we will now present.

\begin{proof} (of part 1) of the main theorem (theorem \ref{st.Main}))
Let us fix a sequence $S$ of KTG moves and let $\Theta$ be the KTG generated by $S$ starting from the tetrahedron. In the previous subsection we have seen that it is possible to express the colored Jones invariant of $\Theta$ in terms of the sequence $S$ and the formulas from figure \ref{fig.Recoupling} by reversing the KTG moves one by one until one reaches the tetrahedron. From the formulas in figure \ref{fig.Recoupling} one sees that the unnormalized colored Jones invariant can be written as a multisum of products and quotients of quantum integers. 

Let $n$ be a fixed integer that is at least one more than the maximum number of poles at $A=e^{\pi i/2N}$ in the summands of the expression of the unnormalized $N$ colored Jones of our KTG $\Theta$. It is very important to note that one can choose such a $n$ to be independent of $N$. To see this we write out all the six-j symbols in the expression for the colored Jones invariant to see that it is a multisum of quotients of quantum factorials. Moreover there is a number $a$ depending only on $S$ such that if $[r]$ occurs in a summand of the expression for the colored Jones then $r\leq aN$. Since the number of zeros of $[r]!$ at $A = \exp(2\pi i /2N)$ is $\left\lfloor r/N \right\rfloor$ we know that all terms $[r]!$ that occur have less than $a$ zeros. It follows that the number of poles in a summand of the multi-sum is less than $a$ times the number of quantum factorials present in the denominator. Suppose that the number of quantum factorials is at most $f$ then we can set $n = af+1$. Note that $f$ is independent of $N$ as well.

Now let $\Gamma$ be an $n$--augmented KTG. If we calculate the unnormalized colored Jones invariant then we get the same multisum as we did for $\Theta$ except that according to lemma \ref{lem.Ring} we have at least $n$ factors $(-1)^{N-1}(\frac{[k N]}{[k]})$ for every unzip move, where $k$ is the summation variable created by the formula for reversing the unzip in skein theory, see figure \ref{fig.Recoupling}. By lemma \ref{lem.Ring} and the construction of $n$ only those summands of the multisum for $\Gamma$ for which the summation variables are multiples of $N$ are non-zero at $A = \exp(2\pi i /2N)$.

Actually only the term where all summation variables are equal to $N$ is non-zero at the root of unity. To see this suppose that we have a term where one summation index equals $uN$ for some integer $u>1$. We may assume that the index whose value is $uN$ is the first in the order of appearance of the summations in the calculation. This means that the index what created at a stage of the calculation when multiples of $N$ other than $N$ itself did not occur. Since labels that are not multiples of $N$ will not contribute the only possibility is that the label came from fusing two edges labeled $N$. But this implies that the new summation ranges over the odd integers between $0$ and $2N$. Therefore only the summand where all labels are $N$ contributes.

Now that we know that in the multi-sum expression for the unnormalized colored Jones invariant of $\Gamma$ only the term where all indices are $N$ contributes at this root of unity, we can easily write down a closed form expression for its value. Reversing the KTG moves in $S$ now becomes a matter of multiplying by a particular factor. For the triangle move this factor is $\frac{\left\langle\begin{array}{ccc} N & N & N\\ N & N & N\end{array}\right\rangle}{\langle N N N \rangle}$, for the unzip it is $\frac{\langle N \rangle}{\langle N N N \rangle}$, for the half twist $H_\pm$ it is $(-1)^{\pm(N-1)/2}A^{\pm(N^2-1)/2}$ and finally one factor $\left\langle\begin{array}{ccc} N & N & N\\ N & N & N\end{array}\right\rangle$ for the tetrahedron.

Taking into account the normalization and the powers of $N$ from the augmentation we get the following formula for the normalized $N$--colored Jones invariant at $A = \exp(\pi i/2N)$. Note that $\Gamma$ has only one split component so that we divide by $\langle U \rangle_N$ only once. $$J_N(\Gamma)(e^{\frac{\pi i}{2N}}) = \left((-1)^{(N-1)/2}A^{\frac{N^2-1}{2}}\right)^\theta N^r\left(\frac{\langle N N N \rangle}{\langle N \rangle}\right)^u \times $$ $$ \frac{\left\langle\begin{array}{ccc} N & N & N\\ N & N & N\end{array}\right\rangle ^t}{\langle N N N \rangle^t}\frac{\left\langle\begin{array}{ccc} N & N & N\\ N & N & N\end{array}\right\rangle}{\langle N\rangle}$$
where $\theta$ is the number of half twists counted with sign, $u$ is the number of unzips in the sequence, $t$ the number of triangle moves and $r$ the number of augmentation rings.

Note that this formula is zero when $N$ is even, because then $$\left\langle\begin{array}{ccc} N & N & N\\ N & N & N\end{array}\right\rangle$$ is zero for generic $A$ because the trivalent vertices do not exist.

For odd $N = 2k+1$ we actually have $\frac{\langle N N N \rangle}{\langle N \rangle} = 1$ at $A = \exp(\pi i/2N)$. To see this, first observe that at this value of $A$ we have $[N+j] = -[j] = -[N-j]$. For generic values of $A$ we write: $$\frac{\langle N N N \rangle}{\langle N \rangle} = (-1)^{3k}\frac{[3k+1]![k]!^3}{[2k+1][2k]!^3} = $$
$$(-1)^k \frac{[1]\cdots[k][k+1]\cdots[2k][2k+1][2k+2]\cdots[3k+1]}{([k+1]\cdots[2k])^3[2k+1]}$$ $$ = (-1)^k \frac{[1]\cdots[k][2k+2]\cdots[3k+1]}{([k+1]\cdots[2k])^2}$$
At $A = \exp(\pi i/2(2k+1))$ this becomes equal to $1$ since $[1]\cdots[k] = [2k][2k-1]\cdots[k+1]$ and $[2k+2]\cdots[3k+1] = (-1)^k[1][2]\cdots[k]$.

The same type of calculation shows that: $$\frac{\left\langle\begin{array}{ccc} N & N & N\\ N & N & N\end{array}\right\rangle}{\langle N \rangle}(e^{\pi i/2N}) = \mathrm{sixj}_{N}$$
where $$\mathrm{sixj}_{N}=\sum_{k=0}^{(N-1)/2}\qbinom{(N-1)/2}{k}^4(e^{\pi i/2N})$$ 

\noindent Therefore the formula for the colored Jones of the KTG $\Theta$ at the root of unity simplifies to:

$$J_N(\Theta)(e^{\frac{\pi i}{2N}})\left((-1)^{(N-1)/2}A^{\frac{N^2-1}{2}}\right)^\theta N^r \mathrm{sixj}_N^{t+1}$$

\noindent as claimed in part 1) of the main theorem.
\end{proof}

\section{The geometry of the complement of an augmented KTG}
\label{sec.GeometryComplement}

In this section we are concerned with the definition and the calculation of the volume of a KTG. The generalization to graphs is not straight forward because of the following problem. A knot is determined by its complement but a graph is not. The homeomorphism type of the complement of a graph does not say much about the graph itself. For example the standard tetrahedron and the connected sum of two theta graphs, shown in figure 10 have homeomorphic complements. From the point of view of the volume conjecture this is very inconvenient because the colored Jones invariant at the root of unity does distinguish these graphs. If the volume conjecture is to hold for KTGs then we need to add a little structure to the complement so that we can recover the adjacency matrix of the graph from its complement. 

In the first subsection we will show how to assign a 3--manifold with boundary to any embedded graph such that the graph can be recovered from the 3--manifold and we still have the possibility of rigid hyperbolic structures. In the second subsection we apply these ideas to augmented KTGs very explicitly and we give a proof of the second part of the main theorem (theorem \ref{st.Main}).

\begin{figure}[here]
\begin{center}
\includegraphics[width = 12cm]{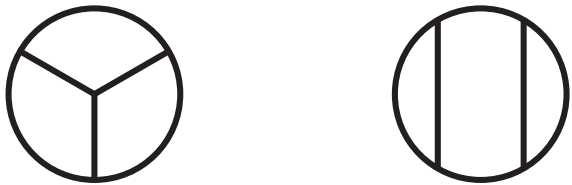}
\caption{Two KTGs with homeomorphic complements.}
\label{fig.HomeoKTGs}
\end{center}
\end{figure}

\subsection{The volume of a 3--manifold with boundary}

In this section we lay down the necessary foundations that allow us to define the hyperbolic volume of a graph in $\mathbb{S}^3$. We start with some general notions about hyperbolic structures on 3--manifolds with boundary following \cite{Frigerio}.

\begin{df}
\label{df.Hyp}
A 3--manifold $M$ is called a hyperbolic manifold with geodesic boundary if it is locally modeled on the right upper half space \newline $\{(x,y,z)\in \mathbb{H}^3 | x \geq 0 \}$.
\end{df}

\noindent In the next subsection we will construct many hyperbolic manifolds with geodesic boundary by glueing ideal polyhedra along some of their faces. The remaining faces will make up the boundary.

Mostow rigidity holds for finite volume hyperbolic 3--manifolds with geodesic boundary provided that the boundary is compact \cite{Frigerio} but when the boundary is non-compact then it may fail. However even in the case of non-compact boundary one can save the rigidity result by considering annular cusp loops. In order to define this notion we first sketch the construction of the natural compactification of a hyperbolic 3--manifold with geodesic boundary.

Let $M$ be an orientable, finite volume, hyperbolic 3--manifold with geodesic boundary. The double $D(M)$ of $M$ is hyperbolic without boundary. Therefore it consists of a compact portion together with some cusps based on Euclidean surfaces. It follows that $M$ also consists of a compact portion together with some cusps of the form $T\times[0,\infty)$, where $T$ is a Euclidean surface with geodesic boundary such that $(T\times[0,\infty))\cap \partial T = \partial T \times [0,\infty)$. $M$ now admits a natural compactification $\bar{M}$ by adding such a surface $T$ for each cusp. Note that the compactification $\bar{M}$ of $M$ is obtained by adding tori and closed annuli. The set of these annuli will be called $\mathcal{A}_M$.

\begin{df}
\label{df.CuspLoop}
A loop $\gamma$ in a hyperbolic 3--manifold with geodesic boundary $M$ is called an annular cusp loop if in $\bar{M}$ it is freely homotopic to the core of an annulus of $\mathcal{A}_M$.
\end{df}

\noindent With this notion in place we can state the rigidity theorem for hyperbolic 3--manifolds with boundary proven in \cite{Frigerio}.

\begin{st}
\label{st.Rigidity}
Let $M$ and $M'$ be two orientable finite volume hyperbolic 3--manifolds with geodesic boundary and let $\phi:\pi_1(M)\to \pi_1(M')$ be an isomorphism. Suppose that $\phi$ satisfies the additional requirement that $\phi(\gamma)$ is an annular cusp loop in $M'$ if and only if $\gamma$ is an annular cusp loop in $M$. Then $\phi$ is induced by an isometry between $M$ and $M'$.
\end{st}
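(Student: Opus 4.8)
The plan is to deduce the statement from the classical Mostow--Prasad rigidity theorem for complete finite volume hyperbolic $3$--manifolds \emph{without} boundary, by passing to the doubles $D(M)$ and $D(M')$. As recalled above, the double of a hyperbolic manifold with geodesic boundary along its geodesic boundary carries a complete finite volume hyperbolic structure, because reflection in a totally geodesic boundary face is a local isometry and the two copies glue smoothly. Thus $D(M)$ and $D(M')$ are finite volume hyperbolic manifolds without boundary, each equipped with a canonical isometric involution, the reflection $\sigma$ respectively $\sigma'$, whose fixed locus is the original geodesic boundary.

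First I would promote the given isomorphism $\phi$ to an isomorphism $\Phi \colon \pi_1(D(M)) \to \pi_1(D(M'))$ of the doubled groups. By van Kampen, $\pi_1(D(M))$ is built from two copies of $\pi_1(M)$ amalgamated along the boundary subgroups (a graph of groups when $\partial M$ is disconnected), and the involution $\sigma$ acts by interchanging the two factors while fixing the amalgamating subgroups. Hence if $\phi$ carries the boundary subgroups of $M$ to those of $M'$, one may set $\Phi$ to be $\phi$ on each factor; this is automatically consistent on the amalgamating subgroups and automatically equivariant, i.e. $\Phi \circ \sigma_\ast = \sigma'_\ast \circ \Phi$. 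Verifying that $\phi$ does respect the boundary subgroups is the crucial point, and it is exactly here that the hypothesis on annular cusp loops is used: when $\partial M$ is non-compact the boundary subgroups are not determined by the abstract group alone, and the annular cusp loops supply precisely the extra peripheral data needed to recognise them.

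With $\Phi$ in hand I would apply Mostow--Prasad rigidity to the doubles: since $D(M)$ and $D(M')$ have finite volume, $\Phi$ is induced by an isometry $F \colon D(M) \to D(M')$, unique once basepoints are normalised. Uniqueness is what makes the descent work. Both $F \circ \sigma$ and $\sigma' \circ F$ are isometries, and they induce respectively $F_\ast \circ \sigma_\ast = \Phi \circ \sigma_\ast$ and $\sigma'_\ast \circ F_\ast = \sigma'_\ast \circ \Phi$, which agree because $\Phi$ was built to satisfy $\Phi \circ \sigma_\ast = \sigma'_\ast \circ \Phi$. By uniqueness the two isometries coincide, so $F \circ \sigma = \sigma' \circ F$. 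Therefore $F$ preserves the fixed loci of the involutions, maps each half of $D(M)$ to a half of $D(M')$, and restricts to an isometry $M \to M'$ inducing $\phi$, as required.

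The main obstacle is the second step in the non-compact boundary case, namely matching the peripheral structure correctly. The cusps of $D(M)$ are of two kinds: those already present in $M$ and those created by doubling the annular ends of $\partial M$ along the curves of $\mathcal{A}_M$. An abstract isomorphism need not respect this dichotomy, and without control over it one could build a $\Phi$ that fails to be equivariant, breaking the descent in the last step; this is precisely why rigidity can fail when the extra hypothesis is dropped. The annular cusp loop condition pins down the cores of the doubling annuli and forces $\Phi$ to send doubling tori to doubling tori, which is what restores equivariance. I expect the careful group-theoretic identification of the boundary subgroups from the annular cusp loop data, together with the verification of equivariance, to be the technical heart of the argument.
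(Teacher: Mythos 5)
The paper does not prove this statement at all: it is quoted verbatim from \cite{Frigerio}, so there is no internal argument to compare yours against. Judged on its own terms, your outline follows the classical doubling strategy (Kojima's argument in the compact--boundary case): double along the geodesic boundary, apply Mostow--Prasad rigidity to the resulting complete finite--volume manifolds, and descend using uniqueness. The descent step is sound: $(\sigma'\circ F)^{-1}\circ(F\circ\sigma)$ is an isometry of $D(M)$ inducing the trivial outer automorphism, hence is the identity because $\mathrm{Isom}(D(M))\to\mathrm{Out}(\pi_1(D(M)))$ is injective for finite--volume hyperbolic manifolds, so $F$ intertwines the reflections and restricts to the fixed loci.

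The genuine gap is the step you yourself flag as ``the crucial point'' and then leave undone: showing that $\phi$ carries the peripheral subgroups of $M$ (the boundary surface subgroups and the rank--one and rank--two cusp subgroups) to those of $M'$, compatibly with the graph--of--groups decomposition of $\pi_1(D(M))$. This is not a routine verification; it is essentially the entire content of the theorem and is exactly where the work in \cite{Frigerio} lies. The annular cusp loop hypothesis hands you, by fiat, the conjugacy classes of the cores of the annuli in $\mathcal{A}_M$, but it does not by itself locate the boundary surface subgroups inside the abstract group $\pi_1(M)$, nor the parabolic subgroups, nor the way these fit together along the annular cusps --- note that a rank--one cusp subgroup is just an infinite cyclic subgroup, algebraically indistinguishable from countless others without further geometric input. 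All of that must be characterised intrinsically (via limit sets or an algebraic description of the peripheral structure) before $\Phi$ can even be defined on the amalgam, let alone shown equivariant. As written, your argument reduces the theorem to its hardest sub--claim and supplies only the standard remainder; to complete it you would need to prove or cite that characterisation, which is precisely why the paper cites \cite{Frigerio} for the full statement rather than sketching a proof.
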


\noindent The additional requirement is necessary only in the case of 3--manifolds with non-compact geodesic boundary. In the compact case the set $\mathcal{A}_M$ is empty. 

In order to save the rigidity we need to include the annular cusp loops into the structure of the manifold itself. This will be done in the context of 3--manifolds with boundary pattern that were introduced in \cite{Johannson}.

\begin{df}
\label{df.ManifoldBoundaryPattern}
A 3--manifold with boundary pattern is a pair $(M,P)$ where $M$ is a 3--manifold with boundary and $P$ is a one dimensional polyhedron $P \subset \partial M$. A homeomorphism of manifolds with boundary patterns is required to restrict to a homeomorphism between the boundary patterns.
\end{df}

\noindent If $M$ is a hyperbolic 3--manifold with geodesic boundary then we would like to include the boundary circles of the annuli $\mathcal{A}_M$ in the natural compactification of $M$ as a boundary pattern but of course they are not part of $\partial M$. Since the annuli connect in $\bar{M}$ to $\partial{M}$ we can push them inside a little to become part of $\partial M$.

\begin{df}
\label{df.BoundaryPattern}
The boundary pattern corresponding to the hyperbolic structure with geodesic boundary on a $M$ is defined to be the set of boundary curves of the annuli in $\mathcal{A}_M$, pushed inside of $\partial M$.
\end{df}

\noindent A corollary of the above rigidity theorem is now the following:

\begin{st}
\label{st.Rigidity2}
Let $(M,P)$ and $(M',P')$ be two orientable finite volume hyperbolic 3--manifolds with geodesic boundary and let $P$ and $P'$ be their corresponding boundary patterns.
If $f:(M,P)\to (M',P')$ is a homeomorphism of 3--manifolds with boundary pattern then $f$ is induced by an isometry between $M$ and $M'$.
\end{st}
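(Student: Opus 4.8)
The plan is to deduce this corollary from the rigidity Theorem \ref{st.Rigidity} by promoting the boundary-pattern homeomorphism $f$ to a $\pi_1$-isomorphism and checking that it satisfies the annular cusp loop hypothesis. First I would observe that a homeomorphism of pairs $f:(M,P)\to(M',P')$ restricts to a homeomorphism $\partial M \to \partial M'$ carrying $P$ onto $P'$, and that $\phi := f_*:\pi_1(M)\to\pi_1(M')$ is an isomorphism. The entire problem then reduces to verifying that $\phi(\gamma)$ is an annular cusp loop in $M'$ if and only if $\gamma$ is an annular cusp loop in $M$, after which Theorem \ref{st.Rigidity} immediately supplies the isometry inducing $\phi$.

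The key step is to characterize annular cusp loops purely in terms of the boundary pattern. By Definition \ref{df.BoundaryPattern}, each component of $P$ is a boundary curve of some annulus $\alpha\in\mathcal{A}_M$, pushed slightly into $\partial M$. Within $\alpha$ this pushed curve is freely homotopic to the core of $\alpha$, so by Definition \ref{df.CuspLoop} every component of $P$ is an annular cusp loop. Conversely, every annular cusp loop is by definition freely homotopic in $\bar M$ to the core of some $\alpha\in\mathcal{A}_M$, and that core is freely homotopic to each of its boundary circles, hence to a component of $P$. Since the inclusion $M\hookrightarrow\bar M$ is a homotopy equivalence (the compactification only attaches cusp surfaces at infinity), these free homotopies in $\bar M$ may be transported back to $M$. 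I would thereby conclude the dictionary: a loop in $M$ is an annular cusp loop precisely when it is freely homotopic to a component of $P$.

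Granting this characterization, the verification of the hypothesis of Theorem \ref{st.Rigidity} is formal. Because $f$ is a homeomorphism of pairs it maps the components of $P$ homeomorphically onto components of $P'$, so $\phi=f_*$ carries the free homotopy class of each component of $P$ to the free homotopy class of a component of $P'$; by the dictionary this means $\phi$ sends annular cusp loops to annular cusp loops. Applying the same reasoning to $f^{-1}$, which is again a homeomorphism of pairs with $f^{-1}(P')=P$, yields the reverse implication and hence the full ``if and only if.'' Theorem \ref{st.Rigidity} then applies to $\phi$ and produces an isometry $M\to M'$ inducing $\phi=f_*$, as required.

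The main obstacle is the characterization in the second paragraph: one must check carefully that the chain of free homotopies relating a pushed-in boundary curve, the annulus core, and the opposite boundary circle all take place in the compactification $\bar M$ and descend to $M$ via the homotopy equivalence, and that the resulting correspondence is a bijection on free homotopy classes, so that no annular cusp loop is omitted and no spurious class is produced. Once this dictionary between $P$ and the annular cusp loops is pinned down, invariance under $f$ follows automatically from $f$ being a homeomorphism of pairs, and the rest is a direct appeal to the rigidity theorem.
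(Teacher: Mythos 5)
Your proposal is correct and follows exactly the route the paper intends: the paper states this result without proof, simply as a corollary of Theorem \ref{st.Rigidity}, and your argument supplies the implicit details (the dictionary identifying annular cusp loops with loops freely homotopic to components of the boundary pattern, via the homotopy equivalence $M\hookrightarrow\bar M$, followed by the observation that a homeomorphism of pairs preserves this condition). No gaps; this is the same approach, just written out.
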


\noindent Thus the hyperbolic structure is still rigid if one takes into account the boundary patterns. Therefore we should only allow hyperbolic structures that agree with the given boundary pattern.

\begin{df}
\label{df.HypBoundary}
A 3--manifold with boundary pattern $(M,P)$ is said to allow a hyperbolic structure with geodesic boundary if it can be given a finite volume hyperbolic structure with geodesic boundary that turns the components of $P$ into annular cusp loops.
\end{df}

\noindent To define the volume for more general manifolds with boundary pattern we use the JSJ--decomposition and add up the volumes of the pieces allowing a hyperbolic structure with geodesic boundary. We state a version of the JSJ--decomposition for 3--manifolds with boundary pattern taken from \cite{Matveev}.

\begin{st}
\label{st.JSJ}
Let $(M,P)$ be an orientable, irreducible and boundary irreducible 3--manifold with boundary pattern. There exists a JSJ-system of annuli and tori that is unique up to admissible isotopy.
The system decomposes $(M,P)$ into three types of JSJ-chambers: simple 3--manifolds, Seifert manifolds and I--bundles.
\end{st}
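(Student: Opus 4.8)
The plan is to deduce this statement from the classical theory of the characteristic submanifold of Jaco--Shalen and Johannson, adapted to the boundary pattern setting. The central object is the characteristic submanifold $V = V(M,P)$: the maximal (up to admissible isotopy) codimension-zero submanifold of $M$ that is a disjoint union of Seifert pieces and $I$--bundles and whose frontier $\mathrm{Fr}(V)$ in $M$ consists of annuli and tori that are admissibly essential, meaning incompressible, boundary-incompressible relative to $P$, and not admissibly boundary-parallel. Once $V$ is constructed, the JSJ-system is defined to be $\mathrm{Fr}(V)$ and the JSJ-chambers are the components of $V$ together with the components of $\overline{M\setminus V}$; the latter are precisely the simple pieces, since by maximality they admit no admissibly essential annulus or torus.

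First I would set up the admissible surface theory. Using irreducibility and boundary-irreducibility of $(M,P)$, one establishes a Kneser--Haken finiteness statement: relative to a fixed triangulation there is a bound on the number of disjoint, pairwise non-parallel, admissibly essential annuli and tori that embed in $(M,P)$. This is the standard normal surface argument, carried out so that the boundary pattern is tracked through the normalization and admissibility is preserved under the exchanges. Finiteness guarantees that a maximal such collection exists, and cutting along it produces finitely many pieces.

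Next I would assemble $V$ from a maximal collection. The pieces that carry an essential Seifert fibration or $I$--bundle structure, together with the parallel families of cutting surfaces adjacent to them, are grouped into the Seifert and $I$--bundle part, while the pieces containing no essential annulus or torus are declared simple. Checking that these pieces close up into a genuine submanifold whose frontier consists of annuli and tori is routine, but it requires care in the boundary-pattern bookkeeping, since an annulus can be essential only relative to $P$ and the regrouping must respect the pattern throughout.

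The hard part will be uniqueness, that is, showing $V$ is determined up to admissible isotopy. This rests on the enclosing property of the characteristic submanifold: every admissibly essential annulus or torus, and more generally every essential admissible map of a Seifert piece or $I$--bundle, can be admissibly isotoped into $V$. Proving the enclosing property is the technical heart of the Jaco--Shalen--Johannson machinery, and in the boundary-pattern setting it is carried out in \cite{Matveev}; the argument combines a hierarchy together with innermost-disk and outermost-arc arguments with a careful analysis of how essential annuli meet the frontier, all arranged so that the isotopies preserve $P$. Granting the enclosing property, any two maximal systems can each be isotoped into the other, and incompressibility of the frontier annuli and tori then forces them to be admissibly isotopic. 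Uniqueness of the JSJ-system and of its chambers follows at once.
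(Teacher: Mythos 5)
The paper does not prove this statement at all: it is quoted as background from Matveev's book \cite{Matveev} (the text preceding it says the theorem is ``taken from \cite{Matveev}''), so there is no internal proof to compare yours against. Your outline is a faithful summary of the standard Jaco--Shalen--Johannson characteristic submanifold argument in the boundary-pattern setting, which is indeed the route Matveev follows, so as a roadmap it is accurate. But be aware that as written it is a plan rather than a proof: the two genuinely hard ingredients --- admissible Kneser--Haken finiteness and the enclosing property --- are named and then deferred, and each occupies a substantial part of \cite{Matveev}. Two smaller points would need more care if you carried the plan out. First, a maximal disjoint collection of pairwise non-parallel admissibly essential annuli and tori is not yet the JSJ-system; you must amalgamate adjacent Seifert and $I$--bundle pieces, discard the superfluous cutting surfaces between them, and check that fibrations extend across the discarded surfaces (this is where the exceptional small Seifert pieces and the pieces admitting both a Seifert and an $I$--bundle structure have to be treated separately). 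Second, ``each system can be isotoped into the other'' does not by itself yield ``admissibly isotopic''; upgrading mutual enclosure to an admissible isotopy of the systems requires the uniqueness statement for the characteristic submanifold itself, not merely the enclosing property for individual surfaces. Both points are handled in \cite{Matveev}, so your proposal is best read as a correct guide to that reference rather than as a self-contained argument --- which is exactly the role the theorem plays in this paper.
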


\noindent Note that the JSJ--chambers are also 3--manifolds with boundary pattern. In addition to the original boundary pattern of $(M,P)$ they also inherit the adjacent boundary curves of the annuli in the JSJ--system \cite{Matveev}. 

\begin{df}
\label{df.HypVol}
Let $(M,P)$ be an orientable, irreducible and boundary irreducible 3--manifold with boundary pattern. We define the hyperbolic volume $ \mathrm{Vol}(M,P)$ of $(M,P)$ to be the sum of the hyperbolic volumes of the JSJ--chambers that allow a hyperbolic structure with geodesic boundary. 
\end{df}

\noindent The rigidity theorem (Theorem \ref{st.Rigidity2}) above and the uniqueness of the JSJ--decomposition show that the volume is a well defined invariant of orientable, irreducible and boundary irreducible 3--manifold with boundary pattern. The definition of volume can extended further by demanding it to be additive under connected sums. 

As a motivation for this definition of the hyperbolic volume of a 3--manifold with boundary pattern we note that it coincides with the simplicial volume in the case of an empty boundary \cite{Ratcliffe}. However for manifolds with boundary this notion seems to be more appropriate. Indeed the Gromov norm no longer agrees with the volume of a hyperbolic manifold as soon as the boundary is non-empty \cite{Jungreis}.

The most important example for our purposes is the so called outside of a graph. This is the version of the complement of a graph that is suitable for carrying a rigid hyperbolic structure.

\begin{df} 
\label{df.GraphComplement}
Let $\Gamma$ be an embedded graph in $\mathbb{S}^3$, where edges without vertices and multiple edges are allowed. We define the outside $O_\Gamma$ of $\Gamma$ to be the 3--manifold with boundary pattern constructed as follows.

Let $N(\Gamma)$ be the neighborhood of $\Gamma$ made up from small open balls around the vertices, closed solid tori around the edges of $\Gamma$ without vertices and small closed solid cylinders around the edges that intersect the closure of the balls around the adjacent vertices in disjoint disks. Define the outside $O_{\Gamma}$ be $\mathbb{S}^3-N(\Gamma)$. Also define the exterior $E_\Gamma$ to be the closure of $O_{\Gamma}$ as a subspace of $\mathbb{S}^3$. 

We will endow $O_\Gamma$ with the boundary pattern $P_\Gamma$ consisting of a circle around every hole on every holed sphere in its boundary. 
\end{df}

\noindent The outside of a graph may not be irreducible because the graph might be the distant union of a number of split components. If this is the case we cut the outside along spheres and cap the spheres off with balls. The resulting pieces are outsides of non-splittable graphs. For such graphs the outside is an orientable, irreducible and boundary irreducible 3--manifold whose boundary consists spheres from which closed disks have been removed. We have one sphere for every vertex the number of holes it has is equal to the valency of the vertex. 

The outside of a graph is not compact and neither is its boundary. The corresponding exterior is compact and will play the role of the natural compactification mentioned above. In the next section we will investigate the geometry and decomposition of 1--augmented KTGs in greater detail.

\subsection{The geometry of augmented KTGs}
\label{sub.GeometryAugmented}

In this subsection we prove part 2) of the main theorem. Let $\Gamma$ be an $n$--augmented KTG. The JSJ--system of the outside $O_\Gamma$ consists of tori only. One for every k--unzip move used to produce $\Gamma$, such that $k \geq 2$. The tori encircle the augmentation rings produced by the k--unzip move. Cutting along such a torus splits off a Seifert fibered JSJ--chamber of the form $(D_k \times \mathbb{S}^1, \emptyset)$, where $D_k$ is a $k$--times punctured disk and $k$ is the number of augmentation rings produced in the k--unzip move. After removing all such Seifert pieces we are left with a JSJ--chamber that is exactly the outside of the singly augmented KTG $\Gamma'$ corresponding to $\Gamma$. Note that by definition the hyperbolic volume of $\Gamma$ is equal to the volume of $\Gamma'$, since we neglect Seifert fibered chambers in the JSJ--composition. 

We aim to show that the outside of any singly augmented KTG $\Gamma'$ admits a hyperbolic structure with geodesic boundary by decomposing it into regular ideal octahedra. The method of decomposition is similar to the construction for links in \cite{FuterPurcell}.

\begin{figure}[here]
\begin{center}
\includegraphics[width = 12cm]{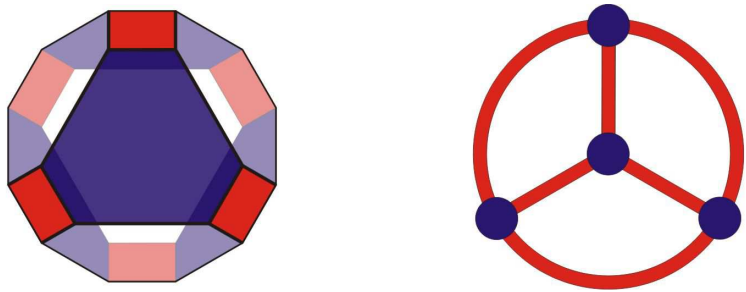}
\caption{A truncated octahedron with colored faces (left). The truncated octahedron as the half space beyond the paper plus infinity (right).}
\label{fig.Octahedron}
\end{center}
\end{figure}

\noindent The first step is to use truncated octahedra to create the exterior of the augmented KTG. The truncated octahedra we use are combinatorial closed polyhedra with eight hexagonal faces and six square truncation faces. Half of the hexagonal faces are colored blue, the other half white in an alternating fashion. The truncation faces are painted red, see figure \ref{fig.Octahedron} (left). 

\begin{lem}
\label{lem.Triangulation}
Every sequence of KTG moves $S$ has the following properties: 
\begin{enumerate}
\item{The exterior $E_S$ of the singly augmented KTG $\Gamma_S'$ is homeomorphic to the space obtained by glueing together $2t+2$ truncated octahedra, where $t$ is the number of triangle moves in $S$.} 
\item{If $\beta$ is a sufficiently small ball around an interior point of an edge in $E_S$, the intersection of $\beta$ and the union of the interiors of the octahedra making up $E_S$ has either two or four components.}
\item{For each vertex of $\Gamma_S'$ there is a pair of blue faces that is sent by the homeomorphism from part 1) onto the three holed sphere in the boundary of the exterior of $\Gamma_S'$ corresponding to that vertex. The boundary circles of every hole are glued together from pairs of red edges of the two blue faces.}
\end{enumerate}
\end{lem}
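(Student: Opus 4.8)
The plan is to prove Lemma~\ref{lem.Triangulation} by induction on the sequence $S$ of KTG moves, building the decomposition into truncated octahedra move by move in parallel with the inductive construction of the KTG itself. The base case is the standard tetrahedron ($t=0$ triangle moves, so $2t+2 = 2$ octahedra). For the tetrahedron graph the singly augmented KTG is obtained by the construction in Definition~\ref{df.Augmented}, and I would verify directly that its exterior is glued from two truncated octahedra, checking all three properties by hand. This is the kind of explicit base-case computation that sets up the bookkeeping (which faces are blue/white/red, how edges and vertices appear) that the inductive step must then maintain.

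\textbf{Inductive step.} Assuming the three properties hold for the exterior built from a sequence $S$, I would examine each of the four KTG moves and show how to modify the octahedron decomposition when that move is appended. The triangle move $A$ is the crucial one: it is the only move that changes the count $2t+2$, so appending one triangle move must correspond to gluing in exactly two more truncated octahedra. Here I would analyze how a triangle move replaces a vertex by a triangle, translate this into the boundary of the exterior (a three-holed sphere is subdivided/replaced by new holed spheres), and exhibit the two new octahedra together with their face-gluings so that property~3 (the pairing of blue faces with the three-holed spheres at vertices, with hole boundaries glued from pairs of red edges) is preserved. For the half-twist moves $H_\pm$ and the unzip move $U$ (here the singly augmented $1$-unzip $U_1$), the octahedron count is unchanged, so I would show these moves are realized by re-gluing the existing faces of the octahedra (a half twist permutes or shears the gluing, an unzip with one augmentation ring corresponds to a local modification that does not introduce new octahedra). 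Property~2, the local structure around interior edge points having two or four components, is a purely local condition on how octahedra meet along edges and should be checked to be preserved under each of these local surgeries.

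\textbf{The main obstacle} I expect is the triangle move and the accompanying verification of property~3, since this is where the nontrivial combinatorics of the blue-face pairing and the red-edge gluing pattern must be tracked precisely across the modification. One has to make sure that the two newly inserted octahedra attach along the correct faces so that the new three-holed spheres at the new vertices of the triangle each receive their own pair of blue faces, while the hole boundaries continue to be assembled from pairs of red edges. The bookkeeping of which hexagonal faces get glued to which (and remain as boundary versus become internal) is the delicate part; the half-twist and unzip cases, by contrast, should be comparatively routine local re-gluings.

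\textbf{Alternatively}, rather than a pure induction one could follow the more geometric route of \cite{FuterPurcell}, placing the augmented KTG in a nearly planar diagram position and directly cutting $\mathbb{S}^3$ minus the augmented graph into polyhedral pieces along the projection plane and the obvious spanning surfaces, then recognizing each piece as a truncated octahedron. In that approach the count $2t+2$ emerges from counting the regions of the diagram (each triangle contributing a fixed number of octahedra), and properties~2 and~3 are read off from the geometry of the cut. Whichever route, the essential content is the same local model---the truncated octahedron of figure~\ref{fig.Octahedron} with its blue/white/red face coloring---and the verification that the KTG moves act on decompositions by the prescribed local operations.
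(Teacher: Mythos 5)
Your plan is essentially the paper's own proof: the paper proceeds by induction on the moves in $S$, with the base case being the two-octahedron gluing for the standard tetrahedron and the inductive step treating the four moves separately, the triangle move contributing the two new octahedra (glued along one blue face each to the pair of blue faces at the affected vertex) and the unzip being an extra identification of existing faces. The only small discrepancy is your description of the half-twist move: in the paper it requires no change to the gluing at all since the exterior is unchanged, and the twist bookkeeping only enters later when an unzip is performed on a twisted edge, where the blue disks are reglued with a half twist.
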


\begin{proof}
The proof proceeds by induction on the number of KTG moves in the sequence $S$. 

\textbf{Induction basis.} Let us suppose first that $S$ is empty so that $\Gamma_S'$ is the standard tetrahedron. Now take two truncated octahedra and glue their white faces together in pairs via the identity. To see how this produces the exterior of the tetrahedron graph let us first look at a single truncated combinatorial octahedron, see figure 11 (right). 

By a homeomorphism we can present the truncated octahedron as the upper half space (thought of as lying behind the paper) plus infinity with the colored faces on its boundary. The blue faces are now small blue disks in the plane, while one white face is stretched out so as to contain infinity.  Now we bend the blue and red faces up as in figure \ref{fig.Base}. In this figure the interior of the octahedron is located directly above the blue dome-like faces in the upper half space. The horizontal plane on which the blue domes rest contains the white faces. 

We can place the second octahedron in the lower half space with the blue faces pushed downwards so that it looks like the reflection of the upper octahedron in the horizontal plane. Glueing the octahedra together along the white faces thus produces a 3--manifold homeomorphic to the exterior of the tetrahedral graph. Since we used exactly two octahedra part 1) is proven.

For part 2) note that all edges of the exterior are alike so that we can concentrate on one of them. A small ball around an interior point of such an edge intersected with the interiors of the octahedra has two components: one in the upper half space and one in the lower half space.

The third part is also clear since the exterior can be arranged in such a way that the horizontal plane cuts it into mirror symmetrically arranged truncated octahedra.

\begin{figure}[h]
\begin{center}
\includegraphics[width = 12cm]{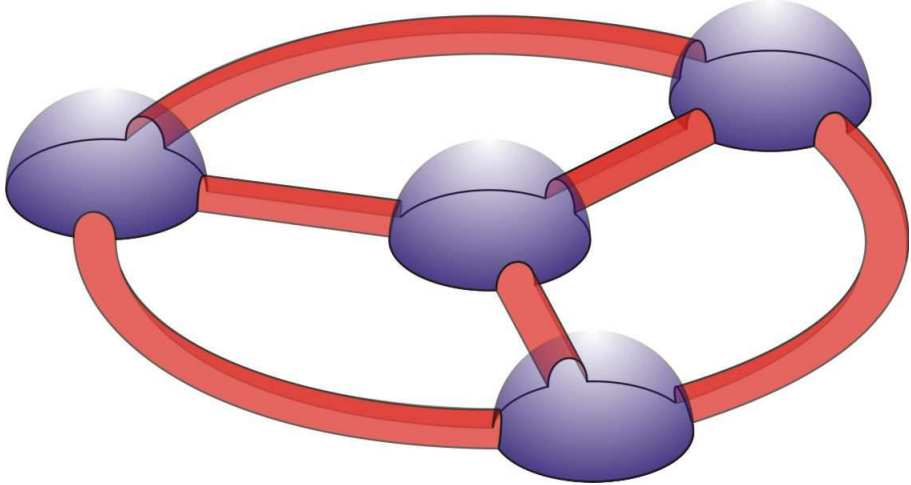}
\caption{The exterior of the standard tetrahedron can be obtained by glueing two truncated octahedra. Only the upper one is shown.}
\label{fig.Base}
\end{center}
\end{figure}

\textbf{Induction step.} Suppose $S$ is a sequence of KTG moves that has the properties 1,2,3 in the lemma. Let $T$ be a sequence of KTG moves obtained by performing one of the four KTG moves directly after $S$. In order to show that $T$ also has properties 1,2,3 we need to consider four cases depending on which KTG move was made: negative or positive half twist, triangle or unzip.

Half twist. If the last move was a half twist then the exteriors of $\Gamma_S$ and $\Gamma_T$ are homeomorphic and the number of triangle moves in producing them is equal. We can therefore use the gluing of truncated octahedra that worked for $S$.

Triangle move. Since $T$ contains one more triangle move than $S$ we need two more truncated combinatorial octahedra to glue the exterior $E_T$ than we needed to glue $E_S$. We will call the two new truncated octahedra $O_1$ and $O_2$. Let $v$ be the vertex of $\Gamma_S$ where the triangle move was performed. By the induction hypothesis 3) we know there are two blue faces $B_1$ and $B_2$ in $E_S$ that make up the three-holed sphere corresponding to $v$. The new glueing is produced from the old by decreeing that one blue face of $O_i$ is to be identified with the face $B_i$. The corresponding pairs of white faces of $O_1$ and $O_2$ should be identified also. 

To see that that the exterior of $\Gamma_T$ is homeomorphic to the above gluing we start by bringing the truncated octahedron $O_1$ into the dome-like form seen in figure \ref{fig.Triangle} (right). The chosen blue face (drawn slightly transparent) is a hemisphere and the rest of $O_1$ is below it. The other blue faces are small domes and the white faces are horizontal. The red faces are half tubes. Now bring $O_2$ into mirror symmetric position below the horizontal plane and glue them along the white faces. The result is a closed ball with three tubular entrances connecting to a triangular tunnel in the middle. It is now clear that once we glue this ball inside the three-holed sphere corresponding to the vertex $v$ we get the exterior of $\Gamma_T$.

To check property 2) we only need to check the edges of $O_1$ and $O_2$. For them it is clear from the mirror symmetric arrangement of $O_1$ and $O_2$. This also proves property 3).

\begin{figure}[h]
\begin{center}
\includegraphics[width = 12cm]{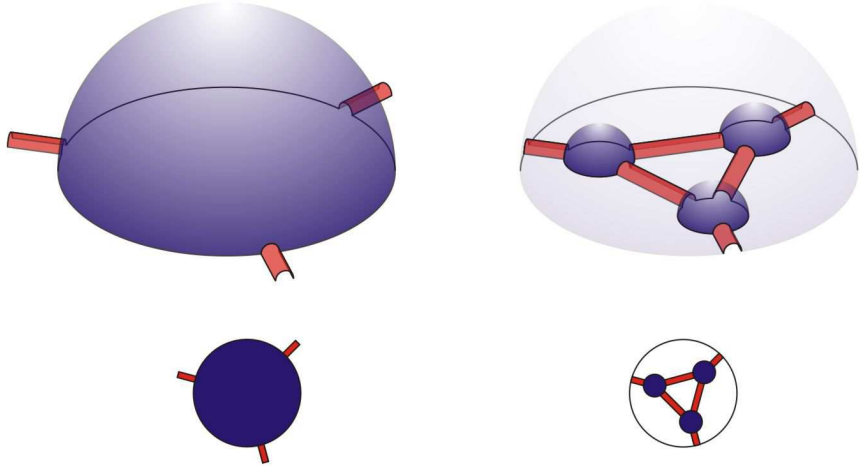}
\caption{The operation corresponding to the triangle move. Only the boundary of the upper half has been depicted. The smaller pictures are a top view and should remind one of the KTG move.}
\label{fig.Triangle}
\end{center}
\end{figure}

Unzip. We will show that the gluing of octahedra that produces the exterior $E_S$ also produces $E_T$ after adding an extra identification of faces. Suppose that $\Gamma_T$ is obtained from $\Gamma_S$ by performing a 1--unzip move on the edge $e$. Take a small open ball neighborhood $B$ in $\mathbb{S}^3$ of the tube around $e$ that contains the two three-holed spheres around to the endpoints of $e$ but does not meet any
other parts of the boundary of $E_S$. This ball is depicted as a cylinder in figure \ref{fig.Unzip} (upper left). By the induction hypothesis we know that under the homeomorphism from part 1) the three-holed spheres both split up into two blue faces each in such a way that the boundary circles are glued from pairs of edges. One can thus arrange the ball $B$ in $\mathbb{R}^3$ such it is mirror symmetric with respect to the horizontal plane. Cutting along the horizontal plane produces two balls $B_1$ and $B_2$. The boundary of one of the balls can then be flattened to look like the second picture of figure 14 (upper right).

Now let us glue together the two blue faces in $B_1$. This produces the next picture in figure \ref{fig.Unzip} (lower right). The red face in the middle of the second picture becomes a tube and the opposite red faces are joined. Now glue the blue faces of $B_2$ in the same way and then glue $B_1$ and $B_2$ back together. We get the ball $B'$ seen in the last picture of figure \ref{fig.Unzip} (lower left). 

Note that when there are $h$ half twists present on the edge $e$ then performing an unzip produces $h$ half twists between the resulting strands. To accommodate this feature in our glueing we cut $B'$ open again along the two pairs of blue faces. They form a punctured disk whose boundary circle is a longitude of the newly produced ring. The disk is pierced twice by the two horizontal components of the graph that go through the ring. A half twist in these two components is produced by reglueing the disks with a half twist. This last correction gives the homeomorphism between the exterior $E_T$ and the glueing of octahedra. When $h$ is even then the correction has not changed the glueing, but if $h$ is odd then we have identified the blue faces of $B_1$ to the diagonally opposite ones of $B_2$.

\begin{figure}[h]
\begin{center}
\includegraphics[width = 12cm]{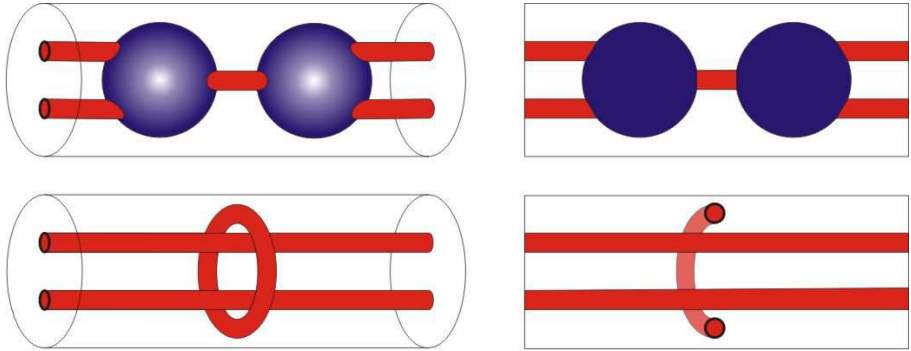}
\caption{The homeomorphism corresponding to the 1--unzip.}
\label{fig.Unzip}
\end{center}
\end{figure}

The extra identification of faces has doubled the number of parts of octahedra coming together at some of the edges of the blue faces involved, but these were previously unglued so this settles part 2) of the lemma. Part 3) is still true because we simply deleted two vertices and left the exterior unchanged around the other ones.
\end{proof}

\noindent Now that we have constructed the exterior of the singly augmented KTG $\Gamma'$ the next step is to go back to its outside. 

\begin{lem}
\label{lem.Glueing}
The outside $O_{\Gamma'}$ is homeomorphic as a 3--manifold with boundary pattern to the glueing of truncated octahedra that we constructed for the exterior in lemma \ref{lem.Triangulation}, except that we remove all closed red square faces and endow it with the boundary pattern formed by lines on the unglued blue faces that are parallel to the removed red edges. 
\end{lem}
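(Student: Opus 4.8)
The plan is to deduce this directly from Lemma \ref{lem.Triangulation}, treating the passage from the exterior to the outside as a purely local modification along the edges. First I would unwind Definition \ref{df.GraphComplement}: since $E_{\Gamma'}=\overline{O_{\Gamma'}}$, the two spaces have the same interior and differ only in which boundary points they contain. Taking the closure adds back exactly the lateral surfaces of the tubes that were deleted around the edges, namely a closed annulus for each edge joining two vertices and a torus for each edge without vertices (the augmentation rings), together with the hole circles bounding them. Equivalently, $O_{\Gamma'}$ is recovered from $E_{\Gamma'}$ by deleting these closed annular and toroidal boundary faces, while remembering, as a boundary pattern, the circles along which they met the holed spheres coming from the vertices.

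The second step is to match these deleted faces with the red square faces of the truncated octahedra. By part 3) of Lemma \ref{lem.Triangulation}, each vertex of $\Gamma'$ corresponds to a pair of blue faces forming a three-holed sphere, whose hole boundaries are glued from pairs of red edges. Tracing the construction of the gluing shows that the red faces lying between two consecutive such three-holed spheres assemble into exactly the lateral annulus around the edge joining the corresponding vertices, and that around an augmentation ring the red faces assemble into the torus around that edge, as is already visible in the unzip step of the previous proof where the central red face becomes a tube around the newly produced ring. Hence the union of all red square faces in the octahedral model is precisely the collection of annular and toroidal edge neighborhoods, so removing the closed red faces realizes exactly the transition from $E_{\Gamma'}$ to $O_{\Gamma'}$.

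Finally I would identify the boundary pattern. The pattern $P_{\Gamma'}$ consists of one circle around each hole of each holed sphere, which under the homeomorphism of Lemma \ref{lem.Triangulation} is the hole's bounding red--blue edge. After the closed red faces are deleted, each such circle survives as a line on the now-unglued blue face running parallel to, and just inside, the removed red edge, which is exactly the pattern asserted in the statement. Combining these three observations yields the claimed homeomorphism of $3$-manifolds with boundary pattern.

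The main obstacle is the bookkeeping in the second step: one must check that every red face belongs to a genuine edge neighborhood and that these neighborhoods close up into annuli and tori rather than into some other surface. For an edge joining two vertices this is immediate from part 3) of Lemma \ref{lem.Triangulation}. For an augmentation ring it requires following the explicit face identifications introduced in the unzip case of that lemma's induction, including the possible diagonal regluing of the blue faces when the edge carried an odd number of half twists, and verifying that the resulting red surface is an unknotted torus whose deletion produces precisely a torus cusp around the ring.
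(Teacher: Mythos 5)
Your proposal is correct and matches the paper's argument in substance: the paper simply notes that the induction of Lemma \ref{lem.Triangulation} goes through with the exterior replaced by the outside and the red truncation faces removed, which is the same identification (red faces $\leftrightarrow$ the closed edge annuli and tori deleted in passing from $E_{\Gamma'}$ to $O_{\Gamma'}$, hole circles $\leftrightarrow$ lines on the blue faces parallel to the removed red edges) that you carry out directly on the final gluing. Your version is, if anything, more explicit than the paper's two-line proof about why the red faces assemble into exactly the edge neighborhoods.
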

\begin{proof}
The proof of the previous lemma goes through step by step if we replace the exterior by the outside and remove the red truncation faces. It is easy to see that the homeomorphism can be made to identify the boundary patterns.
\end{proof}

\noindent Finally we turn to hyperbolic geometry. The above gluing of truncated octahedra has
the property that at each edge either two or four solid angles meet. This means that if we declare
all the truncated octahedra to be regular ideal hyperbolic
octahedra then we obtain a hyperbolic manifold with geodesic boundary with cusps based on the tori and annuli that used to be truncation faces \cite{Ratcliffe}.

The circles in the boundary pattern of the outside $O_{\Gamma'}$ now become annular cusp loops because in the exterior they are freely homotopic to the boundary circles of the annuli in the closure of $O_{\Gamma'}$. The exterior is exactly the natural compactification of $O_{\Gamma'}$. This finishes the construction of the hyperbolic structure on the outside of a singly augmented KTG and also the proof part 2) of the main theorem.

\section{Conclusion}
\label{sec.Conclusion}

The purpose of this paper was to generalize the volume conjecture to KTGs and to prove it for augmented KTGs. In order to generalize to links and KTGs it was necessary to restrict to odd colors. For knots this seems unnecessary and in general one may ask which KTGs will satisfy the original volume conjecture. 

The generalization of the volume of the complement to KTGs involved considering a specific 3--manifold with boundary pattern called the outside of a graph. This notion also makes sense for arbitrary graphs so that one may try to apply geometric techniques to questions in graph embedding. One may also hope to generalize the volume conjecture to arbitrary graphs, but then one must first be able to define the colored Jones invariant of any vertex. For trivalent vertices the colored Jones invariant has a natural meaning as a Clebsch--Gordan projector but for arbitrary vertices there is more choice. 

In this paper we have proven the volume conjecture for augmented KTGs provided they had sufficiently many augmentation rings. It would be very natural to try to remove this restriction on the number of rings but this will require a more detailed analysis of the colored Jones invariant of such KTGs.

Looking back we can summarize our proof as follows. We have seen three different meanings of the KTG moves. Firstly they can be used to generate all KTGs from the tetrahedron. Secondly, reading them backwards yields an expression for the colored Jones invariant in terms of six-j symbols. Thirdly the augmented moves encode combinatorics of the triangulation by octahedra of the corresponding singly augmented KTG. The second and the third viewpoint come together once one notices that augmenting kills the summations in the expression for the Jones invariant (at least at the root of unity). Using the known asymptotics of the regular six-j symbol that remains this gives a natural and proof for the volume conjecture for augmented KTGs.
 
It seems that the augmented KTGs form a tractable class of KTGs that makes a good testing ground for further extensions of the volume conjecture. For example the complexified volume conjecture \cite{MMOTY}. It is to be hoped that with the right definition of the Chern--Simons invariant for manifolds with boundary pattern this conjecture also holds for KTGs. 

A reason for the tractability of the augmented KTGs might be that they are of arithmetic type, see corollary \ref{cor.Arithmetic}. So far all knots links and KTGs for which the volume conjecture was proven, were of arithmetic type or not hyperbolic at all (or a combination of the two).

\newpage


\begin{thebibliography}{99}
\footnotesize
\bibitem{Baker} M. Baker, All links are sublinks of arithmetic links, \textit{Pacific Journal of Mathematics}, 203, 2, (2002), 257--263.
\bibitem{Costantino} F. Costantino, Colored Jones invariants of links in $S^3 \#_kS^2\times S^1$ and the volume conjecture,  J. Lond. Math. Soc.  2, 76, (2007), 1--15.
\bibitem{Frigerio} R. Frigerio, Hyperbolic manifolds with geodesic boundary which are determined by their fundamental group, \textit{Topology Appl.} 145, (2004), 69--81.
\bibitem{Frigerio2} R. Frigerio, An infinite family of hyperbolic graph complements in $S^3$, \textit{J. Knot Theory Ramifications} 14, (2005), 479--496.
\bibitem{FuterPurcell} D. Futer, J. Purcell, Links with no exceptional surgeries, \textit{Commentarii Mathematici Helvetici} 8, 3, (2007), 629--664.
\bibitem{Johannson} K.Johannson, \textit{Homotopy equivalences of 3--manifolds with boundaries}, Lecture Notes in Mathematics 761, Springer, (1979).
\bibitem{Jungreis} D. Jungreis, Chains that realize the Gromov invariant of hyperbolic manifolds, \textit{Ergod. Th. \& Dynam. Sys.} 17, (1997), 643--648.
\bibitem{Kash1} R. Kashaev, Quantum Dilogarithm as a 6j--Symbol, \textit{Modern Physics Letters A}, 9, 40, (1994), 3757--3768.  
\bibitem{Kashaev} R. Kashaev, The hyperbolic volume of knots from quantum dilogarithm, \textit{Lett. Math. Phys.} 39, (1997), 269--275.
\bibitem{Lickorish} W. Lickorish, \textit{An Introduction to Knot Theory}, Springer, (1997).
\bibitem{Masbaum} G. Masbaum, Skein-theoretical derivation of some formulas of Habiro, \textit{Algebraic \& Geometric Topology} 3, (2003), 537--556. 
\bibitem{MasbaumVogel} G. Masbaum, P. Vogel, 3--valent graphs and the Kauffman bracket, \textit{Pacific J. Math.} 164, 2, (1994), 361--381.
\bibitem{Matveev} S. Matveev, \textit{Algorithmic Topology and Classification of 3-Manifolds}, Springer, (2003).
\bibitem{MurakamiMurakami} H. Murakami, J. Murakami, \textit{The colored Jones Polynomials and the simplicial Volume of a Knot}, \textit{Acta Math.} 186, (2001), 85--104.
\bibitem{MMOTY} H. Murakami, J. Murakami, M. Okamoto, T. Takata, Y. Yokota, Kashaev's conjecture
and the Chern--Simons invariants of knots and links, \textit{Experimental Math.} 11, 3, (2002),
427--435.
\bibitem{Ratcliffe} J. Ratcliffe, \textit{Foundations of Hyperbolic Manifolds}, Springer, (1994). 
\bibitem{Turaev} V. Turaev, \textit{Quantum Invariants of Knots and 3-Manifolds}, de Gruyter, (1994).
\bibitem{vanderVeen} R. van der Veen, \textit{Proof of the volume conjecture for Whitehead chains}, \textit{Acta Math. Vietnamica}, 33, 3, (2008), 421--431.
\bibitem{DThurston} D. Thurston, The algebra of knotted trivalent graphs
and Turaev's shadow world, \textit{Geometry \& Topology monographs}, 4, (2002), 337--362.
\bibitem{Thurston} W. Thurston, \textit{The Geometry and Topology of Three-Manifolds}, unpublished lecture notes, Princeton, (1980).
\end{thebibliography}
\end{document}